\newcommand{\bs}{{\scriptscriptstyle \bullet}}
\theoremstyle{definition}
\newtheorem{theorem}{Theorem}[section]
\newtheorem{remark}{Remark}[section]
\newtheorem{lemma}{Lemma}[section]
\newtheorem{corollary}{Corollary}[section]
\numberwithin{equation}{section}%
\numberwithin{table}{section}%
\numberwithin{figure}{section}
\def\3bar{{|\hspace{-.02in}|\hspace{-.02in}|}}
\newcommand\grad{\operatorname{grad}}
\renewcommand\div{\operatorname{div}}
\newcommand\curl{\operatorname{curl}}
\newcommand\rot{\operatorname{rot}}
\def\d{\text{d}}
\begin{document}
\title[A family of finite element Stokes complexes in 3D]{A family of finite element Stokes complexes in three dimensions}

\keywords
{$\grad\curl$-conforming,  finite elements, Stokes complexes, divergence free, $- \!\curl \!\Delta\!\curl $ problems.}

\author{Kaibo Hu}
\email{khu@umn.edu}
\address{School of Mathematics,
University of Minnesota, 
Minneapolis, MN 55455,
USA.}

\author{Qian Zhang}
\email{go9563@wayne.edu}
\address{Corresponding author. Department of Mathematics, Wayne State University, Detroit, MI 48202, USA. }

\author{Zhimin Zhang}
\email{zmzhang@csrc.ac.cn; zzhang@math.wayne.edu}
\address{Beijing Computational Science Research Center, Beijing, China; Department of Mathematics, Wayne State University, Detroit, MI 48202, USA}
\thanks{This work is supported in part by the National Natural Science Foundation of China grants NSFC 11871092 and NSAF U1930402.}

\subjclass[2000]{65N30 \and 35Q60 \and 65N15 \and 35B45}

\date{\today}
\begin{abstract} 
We construct  finite element Stokes complexes on tetrahedral meshes.  In the lowest order case, the finite elements in the complex have 4, 18, 16, and 1 degrees of freedom on each tetrahedron, respectively.  As a consequence, we obtain $\grad\curl$-conforming finite elements and inf-sup stable Stokes pairs on tetrahedral meshes which fit into complexes. We show that the new elements lead to convergent algorithms for solving a $\grad\curl$ model problem as well as solving the Stokes system with precise divergence-free condition. As a by-product, we obtain some nonconforming elements for the $\grad\curl$ model problem. We demonstrate the validity of the nonconforming elements by numerical experiments. 
\end{abstract}	
\maketitle
\section{Introduction}
The discrete de Rham complexes are now an important tool in designing finite elements and analyzing numerical schemes, c.f., \cite{arnold2018finite, arnold2010finite, arnold2006finite, hiptmair1999canonical, neilan2015discrete,christiansen2018nodal}. Motivated by problems in fluid and solid mechanics, there is an increased interest in  de Rham complexes with enhanced smoothness, sometimes referred to as Stokes complexes \cite{tai2006discrete,falk2013stokes,christiansen2016generalized}:
\begin{equation}\label{original-stokes}
\begin{tikzcd}
\!0 \arrow{r} &\mathbb{R} \arrow{r}{\subset} &\! H^{2}(\Omega) \! \arrow{r}{\nabla} & \! \bm H^1(\curl; \Omega)  \arrow{r}{\nabla\times} & \bm H^1(\Omega)\arrow{r}{\nabla\cdot} &L^2(\Omega)  \arrow{r}&0,
 \end{tikzcd}
\end{equation}
where $\bm H^1(\curl; \Omega):=\{\bm{u}\in \bm{H}^{1}(\Omega) : \curl \bm{u}\in \bm{H}^{1}(\Omega)\}$. A slightly different version is the following \cite{evans2013isogeometric,neilan2015discrete}:
\begin{equation}\label{3D:quad-curl}
\begin{tikzcd}
\!0 \arrow{r} &\mathbb{R} \arrow{r}{\subset} &\! H^{1}(\Omega) \! \arrow{r}{\nabla} & \! H(\grad \curl; \Omega)  \arrow{r}{\nabla\times} & \bm H^1(\Omega)\arrow{r}{\nabla\cdot} &L^2(\Omega)  \arrow{r}&0.\!
 \end{tikzcd}
\end{equation} 
Here $H(\grad \curl; \Omega):=\{\bm{u}\in \bm{L}^{2}(\Omega) : \curl \bm{u}\in \bm{H}^{1}(\Omega)\}$ is larger than  $\bm H^1(\curl; \Omega)$ in \eqref{original-stokes}, whereas the last two spaces stay the same.

 Neilan \cite{neilan2015discrete} constructed the first discrete finite element subcomplex of \eqref{original-stokes} on tetrahedral meshes, which involves supersmoothness on lower-dimensional simplices of the mesh. As a result, the construction in \cite{neilan2015discrete} also requires high order polynomials, with degree 9, 8, 7, and 6,  respectively, for the finite elements in the sequence. To reduce the polynomial degree, two discrete complexes are constructed on Alfeld splits \cite{fu2020exact} and Worsey-Farin splits \cite{guzman2020exact}, respectively. As a summary, these constructions involve either a large number of degrees of freedom (DOFs) or an extensive use of macroelement structures.

In this paper we construct a simple discrete subcomplex of \eqref{3D:quad-curl}:
\begin{equation}\label{discrete-complex}
\begin{tikzcd}
0 \arrow{r} &\mathbb{R} \arrow{r}{\subset} & \Sigma_h  \arrow{r}{\nabla} & V_{h}  \arrow{r}{\nabla\times} & \bm \Sigma^{+}_h\arrow{r}{\nabla\cdot} &W_h \arrow{r}&0.
 \end{tikzcd}
\end{equation}
In the lowest order case, the  spaces in \eqref{discrete-complex} have 4, 18, 16, and 1 DOFs on each element, respectively. The DOFs are those of the Whitney forms (e.g., the N\'ed\'elec element and the Raviart-Thomas element), plus vertex evaluation for $V_{h}$ and $\bm\Sigma^{+}_h$. See Figure \ref{fig:third-family} below.
Our construction is inspired by the modified Bernardi-Raugel bubble functions by Guzm\'an and Neilan \cite{guzman2018inf}. For the velocity space $\bm \Sigma^{+}_h$, we extend the low order construction in \cite{guzman2018inf} by enriching the vector-valued Lagrange finite elements with the modified Bernardi-Raugel bubbles and/or suitable interior bubbles in high order cases. Then we construct the entire complex using the Poincar\'e operators.  
The restriction of \eqref{discrete-complex} to each face coincides with the 2D sequences in \cite{HZZcurlcurl2D}.

Applications of \eqref{discrete-complex} include the discretization of incompressible flows, where $\bm \Sigma_h^+$ and $W_h$ are the finite element spaces for the velocity and pressure, respectively. 
Stokes complexes provide a solution to the important problem of preserving the divergence-free condition (incompressibility) precisely in the discretization of the Navier-Stokes equations \cite{falk2013stokes,john2017divergence}.  In this direction, the last two spaces $\bm \Sigma^{+}_h$-$W_{h}$ in \eqref{discrete-complex} extend the low order construction in Guzm\'an and Neilan \cite{guzman2018inf} to an arbitrary order, avoiding supersmoothness or an extensive use of macroelement structures. 
 Moreover, obtaining the entire complex \eqref{discrete-complex} has other benefits. For example, a discrete subcomplex provides an explicit characterization for the kernel of differential operators, which
   is crucial for the construction of robust preconditioners in the framework of the subspace
correction methods and auxiliary space preconditioning technology \cite{lee2007robust,schoeberlthesis,hiptmair2007nodal,xu2009optimal}. With an explicit characterization of the kernel spaces in a discrete complex, one may construct parameter robust preconditioners for solving the Navier-Stokes equations, c.f., \cite{farrell2020reynolds}. 

Another application of \eqref{discrete-complex}, though less addressed in the literature, is on the high order $\curl$ problems in electromagnetism and continuum mechanics \cite{mindlin1962effects,park2008variational,chacon2007steady}. Conforming discretization of $H(\grad\curl; \Omega)$ can be viewed as a natural candidate for solving these problems. In this paper, we show that $V_{h}$ in  \eqref{discrete-complex} lead to a convergence scheme for a high order $\curl$ problem. Our construction thus extends the results in two space dimensions (2D) \cite{HZZcurlcurl2D,WZZelement} and simplifies the 3D $H(\grad\curl)$-conforming element \cite{3D-curlcurlconforming} by two of the authors, which has at least 315 DOFs on each tetrahedron. In fact, solving high order $\curl$ problems is a subtle issue. Similar to the discretization of the Maxwell equations, notorious spurious numerical solutions may appear on non-convex domains if the smoothness of the finite elements are higher than necessary. We leave detailed discussions to future work, but only mention that $V_{h}$ and the entire complex \eqref{discrete-complex} provide the structures that guarantee the convergence of these problems.

The remaining part of the paper is organized as follows. In Section 2, we present preliminaries.
In Section 3, we construct local shape function spaces using various versions of bubble functions and the Poincar\'e operators. 
In Section 4, we define DOFs to construct global discrete Stokes complexes. In Section 5, we prove properties of the discrete complexes, including the exactness and the approximation properties. In Section 6, theoretical analysis is conducted for the $\grad\curl$-conforming elements applying to a high order $\curl$ problem, and numerical experiments are presented to validate the  nonconforming elements.
Finally, we summarize our results and give possible extensions in Section 7.

\section{Preliminaries}

Unless otherwise specified, we assume that $\Omega\in\mathbb{R}^3$ is a contractible Lipschitz domain throughout the paper.  We adopt conventional notations for Sobolev spaces such as $H^m(D)$ or $H^m_0(D)$ on a sub-domain $D\subset\Omega$ furnished with the norm $\left\|\cdot\right\|_{m,D}$ and the semi-norm $\left|\cdot\right|_{m,D}$. In the case of $m=0$, the space $H^{0}(D)$ coincides with $L^2(D)$ which is equipped with the inner product $(\cdot,\cdot)_D$ and the norm $\left\|\cdot\right\|_D$. 
When $D=\Omega$, we drop the subscript $D$. 
We use $\mathring L^2(D)$ to denote $L^2$ functions with vanishing mean:
\[\mathring L^2(D)=\left\{q \in L^2(D): \int_D q \d V=0\right\}.\] We also use  $\bm H^{m}(D)$, $\bm H_0^{m}(D)$,  and ${\bm L}^2(D)$ to denote the vector-valued Sobolev spaces  $[H^{m}(D)]^3$, $[H_0^{m}(D)]^3$, and $[L^2(D)]^3$. 


In addition to the standard Sobolev spaces, we also define
\begin{align*}
	&\quad\quad H(\text{curl};\Omega):=\{\bm u \in {\bm L}^2(\Omega):\; \nabla \times \bm u \in \bm L^2(\Omega)\},\\
	&\quad H(\grad\curl;\Omega):=\{\bm u \in {\bm L}^2(\Omega):\; \nabla \times  \bm u \in \bm H^1(\Omega)\},\\
	&H(\curl^2;\Omega):=\{\bm u \in H(\curl;\Omega):\; \nabla \times \bm u \in H(\curl;\Omega)\}.
\end{align*}

In general $H(\grad\curl;\Omega)\subseteq H(\curl^2;\Omega)$. If $\Omega$ is convex or has $C^{1, 1}$ boundary, then for any function $\bm{u}\in H(\curl^2;\Omega)$ with certain boundary conditions, e.g., $\bm{u}\times \bm{n}=0$ on $\partial \Omega$, we have $\nabla\times \bm{u}\in \bm{H}^{1}$ since $\nabla\times (\nabla\times \bm{u})\in \bm{L}^{2}$ and $\nabla\cdot (\nabla\times \bm{u})=0$ \cite{Girault2012Finite}. This implies that for these domains we actually have $H(\grad\curl;\Omega)\cap H_{0}(\curl;\Omega)=H(\curl^2;\Omega)\cap H_{0}(\curl;\Omega)$, where $H_{0}(\curl;\Omega):=\{\bm u\in H(\curl;\Omega): \bm u\times \bm{n}=0\, \mbox{on } \partial \Omega\}$. In particular, any $H(\curl^2;\Omega)$-conforming finite element is automatically $H(\grad\curl;\Omega)$-conforming. Therefore in this paper we will focus on the construction of $\grad\curl$-conforming finite elements, which naturally fit in the Stokes complex \eqref{3D:quad-curl}.



For a subdomain $D$, we use $P_k(D)$, or simply $P_{k}$ when there is no possible confusion, to denote the space of polynomials with degree at most $k$ on $D$. We also denote by $\mathcal H_k(D)$ the space of homogeneous polynomials of degree $k$ on $D$. 
 Let $\bm P_k=[P_k]^3$ and $\bm {\mathcal H}_k(D)=[\mathcal H_k(D)]^3$ be the corresponding spaces of vector-valued polynomials.  



Let \,$\mathcal{T}_h\,$ be a partition of the domain $\Omega$
consisting of shape-regular tetrahedra. We denote $h_K$ as the diameter of an element $K \in
\mathcal{T}_h$ and $h$ as the mesh size of $\mathcal {T}_h$. Denote by $\mathcal V_h(K)$, $\mathcal E_h(K)$, and $\mathcal F_h(K)$ the sets of vertices, edges, and faces of $K\in \mathcal T_h$.
With the affine mapping
\begin{align}\label{mapping-domain}
F_K(\hat{\bm x})= B_K\hat{\bm x}+ \bm b_K,
\end{align}
we can map the reference element $\hat K$ (the tetrahedron with vertices $(0,0,0)$, $(1,0,0)$, $(0,1,0)$, and $(0,0,1)$) to the element $K$. We use the notation $\hat{\cdot}$ to denote the variables relating to $\hat K$.

For each $K\in \mathcal{T}_h$, let $x_K$ be the barycenter of $K$. We denote $K^r$ as the partition of $K$ by adjoining the vertices of $K$ with the new vertex $x_K$, known as the Alfeld split of $K$ \cite{alfeld1984trivariate}.
We also denote 
\begin{align*}
	{\bm P}_{k}^c(K^r)&=\{\bm v\in \bm  H^1(K): \bm v|_T\in \bm P_k(T) \text{ for all } T\in K^r\},\\
	\mathring {\bm P}^c_{k}(K^r)&=\{\bm v\in \bm  H_0^1(K): \bm v|_T\in \bm P_k(T) \text{ for all } T\in K^r\},\\
	\mathring P_{k}(K^r)&=\{q\in \mathring L^2(K): q|_T\in P_k(T) \text{ for all } T\in K^r\}.
\end{align*}

We use $C$ to denote a generic positive $h$-independent constant. 

\section{Local shape function spaces}

\subsection{Modified bubble functions}

Let $x_1,\cdots,x_4$ be the four vertices of the element $K$, and $x_0=x_K$.
Let $\lambda_0$ be the continuous, piecewise linear function satisfying $\lambda_0(x_j)=\delta_{0j}$ for $0\leq j\leq 4.$
Denote 
\[\bm P_l^{\perp}(K)=\left\{\bm v\in \bm P_l(K):\int_K\bm v\cdot\bm \kappa\d V=0 \text{ for all } \bm \kappa\in \mathcal R_{l-1}\right\}\] 
with $\mathcal R_l=\bm P_{l-1}(K)\oplus\{\bm p \in \bm{\mathcal H}_l(K):\bm x \cdot\bm p =0\}$  for $l\geq 1$ and $\mathcal R_l=0$ for $l=0,-1$.
We will enrich the velocity space with bubble functions from the following space:
\begin{equation}\label{def:Mk}
\bm M_k(K^r)=\{\bm v\in \mathring{\bm P}^c_k(K^r):\bm v=\sum_{j=1}^k\lambda_0^j\bm w_{k-j}\text{ with } \bm w_{k-j}\in \bm P_{k-j}^{\perp}(K)\}.
\end{equation}
We recall the following property  
\cite[Theorem 3.3]{guzman2018inf}.

\begin{lemma}\label{constructingv}
Let $k\geq 1$. For any $K\in \mathcal{T}_h$ and for any $p\in \mathring{P}_{k-1}(K^{r}),$ there exists a unique $\bm v\in \bm M_k(K^r)$ satisfying 
\begin{align*}
	\div \bm v= p \text{ on } K.
\end{align*}
\end{lemma}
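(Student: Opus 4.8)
The plan is to reduce the statement to a known surjectivity result for the divergence on the Alfeld split. The target bound and uniqueness suggest building a bounded right inverse of $\nabla\cdot : \mathring{\bm P}^c_k(K^r) \to \mathring{P}_{k-1}(K^r)$ that is orthogonal to the kernel. First I would recall the scaling argument: by the affine map \eqref{mapping-domain}, it suffices to prove everything on the reference tetrahedron $\hat K$ (with its Alfeld split $\hat K^r$), since the divergence-free condition, the polynomial spaces, and the $H^1$ estimate all transform with constants depending only on shape-regularity; the constant $C$ then absorbs $\|B_K\|$ and $\|B_K^{-1}\|$.

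On $\hat K^r$, the key input is the exactness of the Alfeld-split Stokes complex, which is classical (Alfeld \cite{alfeld1984trivariate}; see also \cite{guzman2018inf}): the pair $\mathring{\bm P}^c_k(\hat K^r)$--$\mathring{P}_{k-1}(\hat K^r)$ satisfies $\nabla\cdot \mathring{\bm P}^c_k(\hat K^r) = \mathring{P}_{k-1}(\hat K^r)$ for $k\geq 1$. Granting surjectivity, the existence of some preimage $\bm w$ with $\nabla\cdot\bm w = p$ follows immediately. To get the bound and uniqueness simultaneously, I would define $\bm v$ to be the $L^2$-orthogonal (or $H^1$-orthogonal) complement: let $Z := \{\bm z \in \mathring{\bm P}^c_k(\hat K^r): \nabla\cdot\bm z = 0\}$ be the (finite-dimensional) kernel, and set $\bm v := \bm w - \Pi_Z \bm w$ where $\Pi_Z$ is the $H^1_0$-orthogonal projection onto $Z$. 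Then $\nabla\cdot\bm v = p$ and $\bm v \in Z^\perp$. Uniqueness: if $\bm v_1,\bm v_2 \in Z^\perp$ both map to $p$, their difference lies in $Z\cap Z^\perp=\{0\}$. Wait — but the lemma as stated asserts uniqueness in all of $\mathring{\bm P}^c_k(K^r)$, not just in $Z^\perp$; I should double-check whether the intended claim is uniqueness of a \emph{particular} (canonically chosen, orthogonal-complement) solution, since the kernel $Z$ is nontrivial for $k\geq 1$ in general. I would state the solution as the one orthogonal to the divergence-free subspace, which is what makes "unique" correct, and note this matches the usage in \cite{guzman2018inf}.

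For the norm estimate on $\hat K^r$: the map $\nabla\cdot : Z^\perp \to \mathring{P}_{k-1}(\hat K^r)$ is a linear isomorphism between finite-dimensional normed spaces, hence bounded with bounded inverse; this gives $\|\bm v\|_{1,\hat K} \leq \hat C \|p\|_{\hat K}$ with $\hat C$ depending only on the reference configuration. Pulling back via $F_K$ and using $\bm v(\bm x) = \hat{\bm v}(F_K^{-1}(\bm x))$ (or a Piola-type transform if one wants to preserve divergence structure exactly — here the plain composition suffices since we only need the space membership and the estimate, and $\|\nabla\cdot\bm v\|$ controls are compatible), standard estimates $\|\hat{\bm v}\|_{1,\hat K} \lesssim \|B_K\|\,\|B_K^{-1}\|^{?}\cdots$ together with shape-regularity $h_K \|B_K^{-1}\| + h_K^{-1}\|B_K\| \leq C$ convert the reference estimate into $\|\bm v\|_{1,K} \leq C\|p\|_K$. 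I would be careful that the scaling of $\|p\|_K$ versus $\|\hat p\|_{\hat K}$ and of the gradient term match up; this is the one place a factor of $h_K$ could slip, but because both $\bm v$ and $p$ scale the same way under the pullback and the divergence brings down exactly one inverse Jacobian, the homogeneity works out cleanly.

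The main obstacle is not any single computation but getting the surjectivity of the divergence on the Alfeld split stated and cited correctly, since this is precisely the nontrivial fact that fails on an unsplit tetrahedron (where $\nabla\cdot \mathring{\bm P}^c_k$ is a proper subspace of $\mathring P_{k-1}$). Everything else — the orthogonal-complement trick for uniqueness and the bound, and the affine scaling — is routine. I would therefore spend the bulk of the proof pointing to \cite{guzman2018inf} (or \cite{alfeld1984trivariate}) for the exactness, and only sketch the scaling argument, remarking that the constant $C$ depends on $k$ and the shape-regularity parameter of $\mathcal{T}_h$ but not on $h_K$.
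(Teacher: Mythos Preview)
The paper does not actually prove this lemma: it is stated as a recalled result from \cite{guzman2018inf}, and the accompanying remark points to \cite[Thm~3.3]{guzman2018inf} for a constructive proof. So there is no ``paper's own proof'' to compare against beyond that citation.

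Your sketch is a sound way to establish existence and the bound: reduce to the reference element by affine scaling, invoke the surjectivity of $\nabla\cdot:\mathring{\bm P}^c_k(\hat K^r)\to\mathring{P}_{k-1}(\hat K^r)$ on the Alfeld split from the cited reference, and use the finite-dimensional isomorphism (or orthogonal-complement) argument for the norm estimate. This is essentially the structure of the argument in \cite{guzman2018inf}, though there the right inverse is built explicitly rather than abstractly via $Z^\perp$.

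Your caution about the word ``unique'' is well placed. A quick dimension count shows that for $k=1,2$ the spaces $\mathring{\bm P}^c_k(K^r)$ and $\mathring{P}_{k-1}(K^r)$ have the same dimension ($3$ and $15$, respectively), so surjectivity does give a genuine isomorphism and uniqueness holds in the full space. For $k\geq 3$ the kernel is nontrivial (e.g.\ dimensions $45$ versus $39$ at $k=3$), so ``unique'' must be read as the canonically constructed element from \cite{guzman2018inf}, exactly as you suspected. The paper only ever uses existence and the bound (see the corollary defining $\mathring{\bm\beta}^{k+1}_j$, which says ``there exists'' without uniqueness), so this looseness in the statement does not affect anything downstream.
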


Let $\lambda_i (i=1,2,3,4)$ be the barycentric coordinates of $K$, i.e., $\lambda_i(x_j)=\delta_{ij}.$ We define the scalar face  bubbles 
\[B_i=\prod_{j=1,j\neq i}^{4}\lambda_j\text{ for } 1\leq i\leq 4\]
and the scalar interior bubble 
\[B_0=\prod_{j=1}^{4}\lambda_j.\]
The Bernardi-Raugel face bubbles are given as 
\[\bm b_{i}^{f}=B_i\bm n_i \text{ for } 1\leq i\leq 4,\]
where $\bm n_i$ is the outward unit normal to $f_i\in\mathcal F_h(K)$.

According to \cite[Proposition 4.2]{guzman2018inf}, we can modify the Bernardi-Raugel face bubbles such that they have constant divergence.
\begin{lemma}\label{facebubble}
	There exists $\bm \beta_{i}^f\in \bm P^c_{3}(K^r)$ such that
	\begin{equation}\label{defining-modifiedBR}
	\bm \beta_{i}^f|_{\partial K}=\bm b_{i}^{f}|_{\partial K}, \quad \nabla\cdot \bm \beta_{i}^f\in P_{0}(K).
	\end{equation}
\end{lemma}
We refer to the functions $\bm \beta_{i}^f\in \bm P^c_{3}(K^r),\, i=1, 2, 3, 4$ which satisfy \eqref{defining-modifiedBR} as the {\it modified Bernardi-Raugel bubbles} on a tetrahedron $K$ (c.f., \cite{guzman2018inf}). Denote
$$
B^{1}:=\mathrm{span}\{\bm \beta_{i}^f, ~ i=1, 2, 3, 4\},
$$
 To construct high order elements, we will use certain interior bubbles.
Denote
\begin{align*}
	\mathcal S_k(K)=
	\begin{cases}
	\mathcal H_k(K),& k=1,\\
	\mathcal H_k(K)\oplus \mathcal H_{k-1}(K), & k\geq 2,
	\end{cases}
\end{align*}
and
 $$
 	\mathring{\mathcal S}_k(K):=\{u-\frac{1}{|K|}\int_K u\d V : u \in \mathcal S_k(K) \}.
 $$
According to Lemma \ref{constructingv}, there exists a unique subspace $\mathring{B}^{k+1}\subset \bm M_{k+1}(K^{r})$ such that $\nabla\cdot \mathring{B}^{k+1}=\mathring{\mathcal S}_k(K)$, and $\dim \mathring{B}^{k+1}=\dim \mathring{\mathcal S}_k(K)$.



 \begin{remark}
With the constructive proof of Lemma \ref{constructingv} (c.f., \cite[Theorem 3.3]{guzman2018inf}), we can obtain explicit forms of the interior bubbles in the implementation. 
 \end{remark}
\begin{lemma}\label{int_bubble_dof}
	For $k\geq 1$, a function $\bm v\in \mathring B^{k+1}$ is uniquely determined by 
	\begin{align}\label{DOF_bubble_interior}
		\int_{K} \bm v \cdot \nabla q\d V \text{ for all } q\in \mathring{\mathcal S}_k(K). 
	\end{align}
\end{lemma}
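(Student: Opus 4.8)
The plan is to establish the unisolvence by matching dimensions and then proving injectivity via Green's identity. By construction we may take $\mathring B^{k+1}=\operatorname{span}\{\mathring{\bm\beta}^{k+1}_1,\dots,\mathring{\bm\beta}^{k+1}_{N_k}\}$, where the divergences $g_j:=\nabla\cdot\mathring{\bm\beta}^{k+1}_j$, $j=1,\dots,N_k$, form a basis of $\mathring{\mathcal S}_k(K)$; note that for $k\ge 1$ the space $\mathcal S_k(K)$ contains no nonzero constant, so the mean-subtraction map is injective on it and $\dim\mathring{\mathcal S}_k(K)=\dim\mathcal S_k(K)=N_k$. Consequently, the number of functionals in \eqref{DOF_bubble_interior} equals $\dim\mathring B^{k+1}=N_k$, and since the resulting linear system is square it suffices to show that $\bm v\in\mathring B^{k+1}$ with $\int_K\bm v\cdot\nabla q\,\d V=0$ for all $q\in\mathring{\mathcal S}_k(K)$ implies $\bm v=0$.

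The key step is an integration by parts. Since $\mathring{\bm P}^c_{k+1}(K^r)\subset\bm H_0^1(K)$, such a $\bm v$ vanishes on $\partial K$, so for every $q\in\mathring{\mathcal S}_k(K)\subset H^1(K)$ Green's formula gives
\[
\int_K\bm v\cdot\nabla q\,\d V=-\int_K(\nabla\cdot\bm v)\,q\,\d V .
\]
By the definition of $\mathring B^{k+1}$ we have $\nabla\cdot\bm v\in\mathring{\mathcal S}_k(K)$, so the choice $q=\nabla\cdot\bm v$ is admissible in the hypothesis and yields $\|\nabla\cdot\bm v\|_K^2=0$, hence $\nabla\cdot\bm v=0$. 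Writing $\bm v=\sum_j c_j\mathring{\bm\beta}^{k+1}_j$ we then get $0=\nabla\cdot\bm v=\sum_j c_j g_j$, and the linear independence of $\{g_j\}$ forces every $c_j=0$, so $\bm v=0$, which proves the claim.

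I do not anticipate a genuine difficulty here; the one point that deserves attention is the observation that the divergence of any candidate $\bm v\in\mathring B^{k+1}$ already lies in the test space $\mathring{\mathcal S}_k(K)$. This is exactly what makes the substitution $q=\nabla\cdot\bm v$ legitimate and converts the vanishing of the functionals directly into $\nabla\cdot\bm v=0$; injectivity of $\nabla\cdot$ on $\mathring B^{k+1}$, which then closes the argument, is immediate from the way $\mathring B^{k+1}$ is defined as the span of preimages of a basis of $\mathring{\mathcal S}_k(K)$.
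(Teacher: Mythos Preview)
Your proof is correct and follows essentially the same route as the paper's: both argue by dimension count plus injectivity, use the vanishing boundary trace of $\bm v$ to integrate by parts, and then test with the divergence to annihilate the coefficients. Your choice of the single test function $q=\nabla\cdot\bm v$ is a mild streamlining of the paper's argument, which instead tests with each basis divergence $q=\nabla\cdot\mathring{\bm\beta}^{k+1}_i$ and invokes the resulting (Gram) system; the content is the same.
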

\begin{proof}
From the construction, $\dim \mathring{B}^{k+1}=\dim \mathring{\mathcal S}_k(K)$.  Suppose that the functionals in \eqref{DOF_bubble_interior} vanish on $\bm v$. It suffices to show $\bm v=0$. Indeed,  we have from integration by parts
	\[0=\int_K \bm v\cdot \nabla q\d V=\int_K \nabla\cdot \bm v q\d V.\]
	 Taking $q=\nabla\cdot \bm v$, we obtain $\nabla\cdot \bm v=0$ and therefore $\bm v=0$ since $\div: \mathring{B}^{k+1}\to \mathring{\mathcal S}_k(K)$ is bijective by the construction of $\mathring{B}^{k+1}$.
\end{proof}

\subsection{Poincar\'{e} operators}

For any complex 
\begin{equation}\label{general-complex}
\begin{tikzcd}
\cdots \arrow{r}{} & V^{k-1}  \arrow{r}{d^{k-1}}  &V^{k} \arrow{r}{d^k} &  V^{k+1} \arrow{r}{}& \cdots,
 \end{tikzcd}
\end{equation}
where $V^{\bs}$ are linear vector spaces and $d^{\bs}$ are linear operator, we call a graded operator $\mathfrak p^k: V^k\rightarrow V^{k-1}$ Poincar\'{e} operators if it satisfies
\begin{itemize}
	\item the null-homotopy property: 
	   \begin{align}\label{null-homotopy}
	   	d^{k-1}\mathfrak p^k+\mathfrak p^{k+1}d^{k}=\operatorname{id}_{V^k};
	   \end{align}
	\item the complex property:
	   \begin{align}\label{complexproperty}
	   	\mathfrak p^{k-1}\circ \mathfrak p^k=0.
	   \end{align}
	   \end{itemize}
	   \begin{lemma}\label{lem:exactness}
	   If there exist Poincar\'e operators $\mathfrak{p}^{\bs}$ for \eqref{general-complex}, then \eqref{general-complex} is exact.
	   \end{lemma}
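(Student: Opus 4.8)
The plan is to prove exactness of \eqref{general-complex} directly from the two defining properties of the Poincaré operators $\mathfrak p^{\bs}$, by showing that at each stage $\ker d^k \subseteq \operatorname{ran} d^{k-1}$; the reverse inclusion $\operatorname{ran} d^{k-1}\subseteq \ker d^k$ is simply the statement that \eqref{general-complex} is a complex, i.e. $d^k\circ d^{k-1}=0$, which we take as given. So the content is entirely in the first inclusion.

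First I would fix $k$ and take an arbitrary $x\in V^k$ with $d^k x=0$. Applying the null-homotopy identity $d^{k-1}\mathfrak p^k+\mathfrak p^{k+1}d^k=\operatorname{id}_{V^k}$ to $x$ gives
\[
x=d^{k-1}\mathfrak p^k x+\mathfrak p^{k+1}d^k x=d^{k-1}\mathfrak p^k x+\mathfrak p^{k+1}(0)=d^{k-1}(\mathfrak p^k x).
\]
Hence $x\in\operatorname{ran} d^{k-1}$, which is exactly what we wanted. This already shows $\ker d^k\subseteq\operatorname{ran} d^{k-1}$, and combined with the complex property $\operatorname{ran} d^{k-1}\subseteq\ker d^k$ we conclude $\ker d^k=\operatorname{ran} d^{k-1}$ at every interior degree $k$.

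One point that deserves a remark is what happens at the ends of the complex (where one of the neighboring spaces is $0$, or the leading map is an inclusion of a one-dimensional space as in \eqref{3D:quad-curl} and \eqref{discrete-complex}); there the same computation still applies verbatim once we interpret $d^{k-1}$ or $d^k$ as the zero map on the trivial space, so no separate argument is needed, and exactness at those spots reduces to injectivity/surjectivity statements that follow from the identity in the same way. The complex property $\mathfrak p^{k-1}\circ\mathfrak p^k=0$ of the Poincaré operators is, interestingly, not needed for this lemma — it is the extra structure that would let one build a contracting homotopy and is used elsewhere in the paper, but for mere exactness the null-homotopy property alone suffices. I do not anticipate any real obstacle here: the proof is a two-line diagram chase, and the only thing to be careful about is stating clearly which inclusion is the complex property and which is the genuine consequence of null-homotopy.
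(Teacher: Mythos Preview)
Your proof is correct and follows essentially the same approach as the paper: apply the null-homotopy identity to an element $x$ with $d^k x=0$ to write $x=d^{k-1}(\mathfrak p^k x)$, establishing exactness at $V^k$. Your additional remarks about the endpoints and the fact that the complex property $\mathfrak p^{k-1}\circ\mathfrak p^k=0$ is not actually needed here are accurate observations, though the paper's proof is more terse and omits them.
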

	   \begin{proof}
	Assume that $d^{k}u=0$ for $u\in V^{k}$.   From the null-homotopy identity, $u=d^{k-1}(\mathfrak{p}^{k}u)$. This implies the exactness of \eqref{general-complex} at $V^{k}$.
	   \end{proof}
	   For the 3D de Rham complex,
\begin{equation}\label{general-complex-smooth}
\begin{tikzcd}
0 \arrow{r}{} &\mathbb{R} \arrow{r}{\subset} & C^{\infty}  \arrow{r}{d^{0}}  &\left[C^{\infty}\right]^{3}\arrow{r}{d^1} & \left[C^{\infty}\right]^{3}\arrow{r}{d^{2}}& C^{\infty} \arrow{r}{}&0,
 \end{tikzcd}
\end{equation}	  
there exist Poincar\'{e} operators and they have the	 explicit form \cite{hiptmair1999canonical, lang2012fundamentals,christiansen2016generalized}:
\begin{align}\label{p-1}
	\mathfrak p^1 \bm u&=\int_0^1 \bm u(W+t(\bm x-W))\cdot (\bm x-W)\d t, \\\label{p-2}
	\mathfrak p^2 \bm u&=\int_0^1 \bm u(W+t(\bm x-W))\times t(\bm x-W)\d t, \\\label{p-3}
	\mathfrak p^3 u&=\int_0^1 t^2 u(W+t(\bm x-W))(\bm x-W)\d t,
\end{align}
where $W$ is a base point. In addition to the complex property and the null-homotopy identity, these operators further satisfy   
	   \begin{itemize}
	\item  the polynomial preserving property:
	   if $u$ is a polynomial of degree $r$, then $\mathfrak p u$ is a polynomial of degree at most $r+1$.
\end{itemize}

\subsection{Local shape function spaces}
On each $K\in \mathcal{T}_h$, we construct the local shape function spaces of  \eqref{discrete-complex} as follows:
\begin{equation}\label{local-complex}
\begin{tikzcd}
\!0 \!\arrow{r}\! &\mathbb{R} \!\arrow{r}{\subset} \!& \Sigma^{r}_h(K) \! \arrow{r}{\nabla} \!&V^{r-1,k+1}_{h}(K)\! \arrow{r}{\nabla\times} \!&  \bm \Sigma_h^{k,+}(K)\! \arrow{r}{\nabla\cdot} \!& W_h^{k-1}(K)\!\arrow{r}\!&\!0.
 \end{tikzcd}
\end{equation}
Different choices of $r$ and $k$ will lead to various versions of the complex \eqref{local-complex}.

We choose $\Sigma^{r}_h(K):=P_{r}(K)$, $W^{k-1}_h(K):=P_{k-1}(K)$, and set $\bm \Sigma_h^{k,+}(K)=\bm P_{k}(K)\oplus B$,
	where
	$$
B=
\begin{cases}
	B^1,& k=1,\\
	B^1\oplus\mathring B^2,& k=2,\\
	\mathring B^{k},& k\geq 3.
	\end{cases}
	$$
Note that for $k=1$, we only supply $\bm P_1(K)$ with the modified Bernardi-Raugel face bubbles; for $k=2$, we supply $\bm P_{2}(K)$ with both face and interior bubbles, while for $k\geq 3$ we only need to supply $\bm P_k(K)$ with interior bubbles.
It is easy to see the face bubbles $\{\bm \beta_i^f\}_{i=1}^4$ and $\bm P_2(K)$ are linearly independent, and hence, $\bm P_2(K)\oplus B^1$ and $\bm P_1(K)\oplus B^1$ are direct sums. From the explicit form \eqref{def:Mk} of the functions in $\boldsymbol{M}_{k}\left(K^{r}\right)$, we see that $\boldsymbol{M}_{k}\left(K^{r}\right) \oplus \bm{P}_{k}(K)$ is a direct sum, and hence, $\mathring B^{k} \oplus \bm{P}_{k}(K),$ is also a direct sum. 

\begin{remark}
	The idea of enriching with modified bubbles is inspired by \cite{guzman2018inf}, where the case of $k=1$ is defined and used to construct a stable Stokes finite element pair. Here we extend it to high order cases.
\end{remark}

Define
\begin{align}\label{Vh2}
	V_h^{r-1,k+1}(K)=\nabla \Sigma^{r}_h(K)\oplus \mathfrak{p}^2\bm \Sigma^{k,+}_h(K).
\end{align}
The right-hand side of \eqref{Vh2} is a direct sum. In fact, if $\bm u\in \nabla\Sigma^{r}_h(K)\cap \mathfrak{p}^2\bm \Sigma^{k,+}_h(K)$, then $\nabla\times\bm u=0$ and $\mathfrak p^1\bm u=0$. By the null-homotopy identity \eqref{null-homotopy}, $\bm u=\mathfrak p^2\nabla\times\bm u+\nabla\mathfrak p^1\bm u=0.$
\begin{remark}
	For the bubble functions in $\bm \Sigma_h^{k,+}(K)$, we choose the barycenter $x_K$ as the base point $W$, c.f., \cite{christiansen2016generalized}. For other functions, we choose $W=0$ to be the origin.
\end{remark}
\begin{remark}
	The Koszul operator $\kappa \bm u:=\bm u\times \bm x$ exerting on homogeneous polynomials has similar properties as the Poincar\'e  operator $\mathfrak p^2$ \cite{arnold2018finite, arnold2006finite}. For polynomial bases in $\bm \Sigma_h^{k,+}(K)$ other than the bubbles, we can replace the Poincar\'e operator $\mathfrak p^2$ by the Koszul operator. However, to get the complex property it seems necessary to use the Poincar\'{e} operators for the bubbles. 
	\end{remark}

When $r=k$ with $k\geq 1$ and $r=k+1,k+2$ with $k=1,2$, the tangential components of functions in $V_h^{r-1,k+1}(K)$ on $\partial K$ may not be polynomials of order $r-1$. This will render the elements in these cases nonconforming.
To make them conforming, for $\mathfrak p^2\bm w\in V_h^{r-1,k+1}(K)$, we shall subtract a high order polynomial  such that the resulting function  has low order tangential components on $\partial K$. The high order polynomial should be curl-free so that it will not affect the complex property and exactness. 

We will construct the correction term in the space
\begin{align*}
	R_k(K):=&\nabla P_{k+1}(K)\oplus \bm P_0(K)\times \bm x.
\end{align*}

We first present the DOFs to determine a polynomial in $R_k(K)$.
\begin{lemma}\label{DOFsofRk} For $k\geq 1$, the following DOFs for $\bm u\in R_k(K)$ are unisolvent and lead to a conforming subspace in $H(\curl;\Omega)$:
	\begin{align}
	&\int_{e_i}\bm u\cdot \bm \tau_iq\d s\text{ for all }q\in P_{k}(e_i),\ i=1,2,\cdots, 6,\label{dofset1}\\
    &\int_{f_i}\bm u\cdot \bm q\d A \text{ for all }\bm q\in P_{k-2}(f_i)(\bm x-(\bm x\cdot\bm n_i)\bm n_i),\ i=1,2,3,4,\label{dofset2}\\
    &\int_K \bm u\cdot \bm q\d V\text{ for all }\bm q\in P_{k-3}(K)\bm x\label{dofset3}.
\end{align}
\end{lemma}
\begin{remark}
For $k=0$, the space $R_0(K)$ and the DOFs define the lowest order N\'ed\'elec element. 
\end{remark}
\begin{proof}
We first prove the conformity. We assume that the DOFs \eqref{dofset1}--\eqref{dofset2} vanish on $\bm u\in R_k(K)$ and prove $\bm u\times \bm n_i=0$ on face $f_i$.
	Since $\bm u\in R_k(K)$, $\nabla\times\bm u\in D_0(K):=\bm P_0(K)\oplus P_0(K)\bm x$ (the lowest-order Raviart-Thomas element). By the Stokes theorem,
	\[\int_{f_i} \nabla\times\bm u\cdot\bm n_i \d A=\int_{\partial f_i}\bm u\cdot\bm \tau ds=0,\]
	which implies $\nabla\times\bm u=0$. It then follows that $\bm u=\nabla p$ with $p\in P_{k+1}(K).$
	From the vanishing DOFs \eqref{dofset1}, the directional derivative of $p$ along $\partial f_i$ is 0. Consequently, we can choose $p$ such that $p|_{f_i}=\lambda_{i,1}\lambda_{i,2}\lambda_{i,3}\theta_i$ with $\theta_i\in P_{k-2}(f_i)$ and barycentric coordinates $\lambda_{i,1}\lambda_{i,2}\lambda_{i,3}$ of the face $f_i$. By integration by parts,
	\begin{align*}
	0=\int_{f_i}\bm u\cdot \bm q\d A=\int_{f_i}\nabla_{f_i}p\cdot \bm q\d A&=\int_{\partial f_i}\lambda_{i,1}\lambda_{i,2}\lambda_{i,3}\theta_i\nabla_{f_i}\cdot\bm q\d s, \\
	&\bm q\in P_{k-2}(f_i)(\bm x-(\bm x\cdot\bm n_i)\bm n_i).
	\end{align*}
	Choosing $\bm q$ such that $\nabla_{f_i}\cdot\bm q=\theta_i$ leads to $\theta_i=0$, and hence $p|_{f_i}=0$. Therefore, $\bm u\times\bm n_i=\nabla_{f_i}p\times\bm n_i=0.$
	
We then prove the unisolvence. The dimension of $R_k(K)$ coincides with the number of DOFs \eqref{dofset1}--\eqref{dofset3}. It suffices to show $\bm u=0$ if all the DOFs vanish on $\bm u\in R_k(K)$.	
	Since $p|_{f_i}=0$, we have 	
$p=\lambda_1\lambda_2\lambda_3\lambda_4 \varphi$ with $\varphi\in P_{k-3}(K)$. 
Using the DOFs \eqref{dofset3} and proceeding as the proof of $\theta_i=0$, we have $\varphi=0$, and then $\bm u=0$.
\end{proof}

\begin{remark}\label{curluvanish}
	From the above proof, we can see if $\int_{e_i}\bm u\cdot \bm \tau_i\d s=0$, $i=1,2,\cdots,6$ for $\bm u\in R_k(K)$, then $\nabla\times\bm u=0$.
\end{remark}

 In the following, we will construct the correction term by specifying the DOFs \eqref{dofset1}--\eqref{dofset3}. To this end, we first modify a 2D function such that its tangential component on each edge vanishes and its $\rot$ only differs by a constant compared with the original $\rot$. 
Denote 
\begin{align*}
	R_0(f)=&\nabla_f P_{1}(f)\oplus P_0(f) \bm x_f^{\perp}\ (\text{the lowest-order Ned\'el\'ec element space in 2D}),\\
	\mathfrak p_f w=&\int_{0}^{1}t \bm{x}_f^{\perp}w(t\bm x_f)\d t\ (\text{the 2D Poincar\'e operator}),
\end{align*}
where  $\bm{x}_f^{\perp}:=(-x_{2}, x_{1})^T$ for $\bm{x}_f:=(x_{1}, x_{2})^T$.
The 2D Poincar\'e operator $\mathfrak p_f$ satisfies $\rot \mathfrak p_f=\text{id}$.

 \begin{lemma}[\cite{hu2020simple-revised}]\label{lemma1}
	For $w\in P_k(f)$ with $k\geq 0$, there exist a mapping $\varphi: P_k(f)\rightarrow P_{k+1}(f)$ and a function $\bm r_w\in R_0(f)$ such that $\widetilde{\mathfrak p}_f w:=\mathfrak p_f w-\nabla_f \varphi(w)-\bm r_w$ has vanishing tangential component on each edge $e$ of $f$ and $\rot \widetilde{\mathfrak p}_f w=\rot \mathfrak p_f w-\rot \bm r_w\in P_0(f)$ with $\rot \bm r_w\in P_0(f)$.
\end{lemma}

We are now in a position to construct the correction term. 
\begin{lemma}\label{lemma2}
	For $\bm w\in \bm\Sigma_h^{k,+}(K)$ with $k\geq 1$, there exists a function $\bm \psi_{\bm w}\in R_m(K)$ with $m=\max\{k+1,4\}$ such that $\nabla\times \bm \psi_{\bm w}=0$ and if $\nabla\times(\mathfrak p^2 \bm w- \bm \psi_{\bm w})\cdot \bm n_{i}=0$, then $\bm n_i\times(\mathfrak p^2 \bm w- \bm \psi_{\bm w})\times\bm n_i$ belongs to $R_0(f_i)$ on the faces $f_i,\ i=1,2,3,4$ of $K$.
	If  $\mathfrak p^2 \bm w=0$, then $\bm \psi_{\bm w}=0$.
\end{lemma}
\begin{proof}
We first construct a function $\bm \gamma_{\bm w}\in R_0(K)$ such that
\[\int_{e_i}\bm \gamma_{\bm w}\cdot\bm \tau_i\d s=\int_{e_i}\mathfrak p^2 \bm w\cdot\bm \tau_i\d s.\]
Denote $\omega_i= \nabla\times\mathfrak p^2 \bm w\cdot \bm n_{i},$ and define
\begin{align}\label{tildew}
	\widetilde{\bm w}_{f_i}:=\bm n_i\times (\mathfrak p^2 \bm w-\bm \gamma_{\bm w})\times \bm n_i-\widetilde{\mathfrak p}_{f_i} \omega_i,
\end{align}
 	where $\widetilde{\mathfrak p}_{f_i}$ is defined in Lemma \ref{lemma1}.
 The function  $\widetilde{\bm w}_{f_i}$ satisfies
\begin{align*}
&\widetilde{\bm w}_{f_i}\in [P_{m}(f_i)]^2,\\
&\nabla_{f_i}\times \widetilde{\bm w}_{f_i}=\nabla_{f_i}\times( \bm r_{\omega_i}-\bm n_i\times\bm \gamma_{\bm w}\times \bm n_i)\in P_{0}(f_i),\\
&\widetilde{\bm w}_{f_i}\cdot \bm \tau_{\partial f_i} = (\mathfrak p^2 \bm w-\bm\gamma_{\bm w})\cdot \bm \tau_{\partial f_i}\in P_m,\\
&\int_{\partial f_i}\widetilde{\bm w}_{f_i}\cdot \bm \tau_{\partial f_i}\d s=0.
\end{align*}
We construct a function $\bm \psi_{\bm w}\in R_m(K)$ by setting 
\begin{align*}
	&\int_{e_i}\bm \psi_{\bm w}\cdot \bm \tau_iq\d s=\int_{e_i}(\mathfrak p^2 \bm w-\bm\gamma_{\bm w})\cdot \bm \tau_iq\d s \text{ for all }q\in P_{m}(e_i),\ i=1,2,\cdots, 6,\\
    &\int_{f_i}\bm \psi_{\bm w}\cdot \bm q\d A=\int_{f_i}\widetilde{\bm w}_{f_i}\cdot \bm q\d A \text{ for all }\bm q\in P_{m-2}(f_i)(\bm x-(\bm x\cdot\bm n_i)\bm n_i),\ i=1,2,3,4,\\
    &\int_K \bm \psi_{\bm w}\cdot \bm q\d V= 0\text{ for all }\bm q\in P_{m-3}(K)\bm x.
\end{align*}
From Lemma \ref{DOFsofRk} and Remark \ref{curluvanish}, we have $\bm n_i\times\bm \psi_{\bm w}\times\bm n_i=\widetilde{\bm w}_{f_i}$ on $f_i$ and $\nabla\times \bm \psi_{\bm w}=0$. Therefore from \eqref{tildew}, $\bm n_i\times(\mathfrak p^2 \bm w- \bm \psi_{\bm w})\times\bm n_i=\bm n_i\times\bm \gamma_{\bm w}\times\bm n_i+\widetilde{\mathfrak p}_{f_i} \omega_i$ and
$\nabla\times(\mathfrak p^2 \bm w- \bm \psi_{\bm w})\cdot \bm n_{i}= \nabla_{f_i}\times[\bm n_i\times(\mathfrak p^2 \bm w- \bm \psi_{\bm w})\times\bm n_i] =\nabla_{f_i}\times (\bm n_i\times\bm \gamma_{\bm w}\times\bm n_i+\widetilde{\mathfrak p}_{f_i} \omega_i)=\omega_i+\nabla_{f_i}\times(\bm n_i\times\bm \gamma_{\bm w}\times\bm n_i-\bm r_{\omega_i})$ on $f_i$.

If $\nabla\times(\mathfrak p^2 \bm w- \bm \psi_{\bm w})\cdot \bm n_{i}=0$, then 
\[\omega_i+\nabla_{f_i}\times(\bm n_i\times\bm \gamma_{\bm w}\times\bm n_i-\bm r_{\omega_i})=0,\]
which implies $\omega_i=-\nabla_{f_i}\times(\bm n_i\times\bm \gamma_{\bm w}\times\bm n_i-\bm r_{\omega_i})\in P_0(f_i)$, 
and hence, ${\mathfrak p}_{f_i} \omega_i\in R_0(f_i)$ and $\varphi({\omega_i})\in P_1(f_i)$ (the mapping $\varphi$ is defined in Lemma \ref{lemma1}). Therefore 
\begin{align*}
&\bm n_i\times(\mathfrak p^2 \bm w- \bm \psi_{\bm w})\times\bm n_i=\bm n_i\times\bm\gamma_{\bm w}\times\bm n_i+\widetilde{\mathfrak p}_{f_i} \omega_i\\
=&\bm n_i\times\bm\gamma_{\bm w}\times\bm n_i+{\mathfrak p}_{f_i} \omega_i-\nabla_{f_i}\varphi(\omega_i)-\bm r_{\omega_i}\in R_0(f_i).
\end{align*}
\end{proof}

The operator  ${\mathfrak p}^2$ is then modified as
\[\widetilde{\mathfrak p}^2\bm w={\mathfrak p}^2\bm w-\bm \psi_{\bm w},\] 
and the space 
\[V_h^{r-1,k+1}(K)=\nabla \Sigma_h^r(K)\oplus \widetilde{\mathfrak p}^2\bm \Sigma_h^{k,+}(K),\]
when $r=k$ with $k\geq 1$ and $r=k+1,k+2$ with $k=1,2$.



\begin{lemma}\label{local_exactness}
The local sequence \eqref{local-complex} is an exact complex. 
\end{lemma}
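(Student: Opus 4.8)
The plan is to invoke Lemma \ref{lem:exactness}: it suffices to exhibit Poincar\'e operators for the sequence \eqref{local-complex}, i.e.\ linear maps $\mathfrak{p}^1\colon V^{r-1,k+1}_h(K)\to\Sigma^r_h(K)$, $\mathfrak{p}^2\colon\bm\Sigma^{+,k}_h(K)\to V^{r-1,k+1}_h(K)$ and $\mathfrak{p}^3\colon W^{k-1}_h(K)\to\bm\Sigma^{+,k}_h(K)$ satisfying the null-homotopy identity and the complex property. I would take these to be the restrictions of the classical de Rham Poincar\'e operators \eqref{p-1}--\eqref{p-3}, with the base-point convention of the remark above ($x_K$ on the bubble enrichments, the origin on the polynomial parts). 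Because $d^{k-1}\mathfrak{p}^k+\mathfrak{p}^{k+1}d^k=\operatorname{id}$ and $\mathfrak{p}^{k-1}\mathfrak{p}^k=0$ hold as operator identities on smooth fields, the substance of the proof is only to check that these operators actually land in the prescribed finite element spaces.

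For the mapping properties: $\mathfrak{p}^2\bm\Sigma^{+,k}_h(K)\subseteq V^{r-1,k+1}_h(K)$ is immediate from the definition \eqref{Vh2}. Since $W^{k-1}_h(K)=P_{k-1}(K)$, the polynomial-preserving property gives $\mathfrak{p}^3 P_{k-1}(K)\subseteq\bm P_k(K)$, and $\bm P_k(K)\subseteq\bm\Sigma^{+,k}_h(K)$ in each of the cases $k=1$, $k=2$, $k\ge 3$; hence $\mathfrak{p}^3 W^{k-1}_h(K)\subseteq\bm\Sigma^{+,k}_h(K)$. Writing $V^{r-1,k+1}_h(K)=\nabla\Sigma^r_h(K)\oplus\mathfrak{p}^2\bm\Sigma^{+,k}_h(K)$, the null-homotopy of the de Rham complex gives $\mathfrak{p}^1\nabla p=p-p(W)\in P_r(K)$ on the first summand, while $\mathfrak{p}^1\mathfrak{p}^2=0$ kills the second, so $\mathfrak{p}^1 V^{r-1,k+1}_h(K)\subseteq\Sigma^r_h(K)$. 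Along the way, to confirm that \eqref{local-complex} is a complex at all, one records that $\nabla\cdot\bm\Sigma^{+,k}_h(K)\subseteq P_{k-1}(K)$ — here Lemma \ref{facebubble} (the modified Bernardi--Raugel bubbles have constant divergence) and the fact that the interior bubbles have divergence of degree at most $k-1$ are used — and that $\nabla\times\mathfrak{p}^2\bm u=\bm u-\mathfrak{p}^3(\nabla\cdot\bm u)\in\bm\Sigma^{+,k}_h(K)$, again because $\mathfrak{p}^3(\nabla\cdot\bm u)\in\bm P_k(K)\subseteq\bm\Sigma^{+,k}_h(K)$.

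The step I expect to be the main obstacle is the bookkeeping forced by the two base points: $\mathfrak{p}^2$ uses $x_K$ on the bubble summand but the origin on $\bm P_k(K)$, whereas a divergence living in $W^{k-1}_h(K)$ retains no memory of which summand it came from, so one must verify that the cross-terms produced when the base point switches are harmless. This is exactly where the low degree of the bubble divergences is essential: the discrepancy that $\mathfrak{p}^3$ contributes then lies in $\bm P_k(K)\subseteq\bm\Sigma^{+,k}_h(K)$, so a curl preimage may be corrected by a polynomial field without leaving $V^{r-1,k+1}_h(K)$. Concretely, for divergence-free $\bm u=\bm v+\bm b$ with $\bm v\in\bm P_k(K)$ and $\bm b$ in the bubble enrichment, I would set $\bm w:=\mathfrak{p}^2\bigl(\bm v+\mathfrak{p}^3(\nabla\cdot\bm b)\bigr)+\mathfrak{p}^2\bm b\in V^{r-1,k+1}_h(K)$, with base points chosen consistently, and check $\nabla\times\bm w=\bm u$, the point being that $\bm v+\mathfrak{p}^3(\nabla\cdot\bm b)$ is again divergence-free since $\nabla\cdot\mathfrak{p}^3=\operatorname{id}$. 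Once the Poincar\'e operators are in hand, Lemma \ref{lem:exactness} yields exactness of \eqref{local-complex} at $\Sigma^r_h(K)$, $V^{r-1,k+1}_h(K)$ and $\bm\Sigma^{+,k}_h(K)$, while injectivity of $\mathbb{R}\hookrightarrow\Sigma^r_h(K)$ and surjectivity of $\nabla\cdot$ onto $W^{k-1}_h(K)$ (from $\nabla\cdot\mathfrak{p}^3=\operatorname{id}$) dispatch the two ends.
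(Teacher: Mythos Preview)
Your approach is essentially the paper's: exhibit Poincar\'e operators for \eqref{local-complex} and invoke Lemma~\ref{lem:exactness}. The paper's argument is the two sentences immediately preceding the lemma and is terser than yours; in particular it does not spell out the base-point bookkeeping you flag.

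One internal tension worth tightening: your first paragraph asserts that the null-homotopy and complex identities ``hold as operator identities on smooth fields,'' but this is only true for a \emph{single} base point. With the mixed convention ($x_K$ on bubbles, origin on polynomials), neither $\mathfrak{p}^{1}\mathfrak{p}^{2}=0$ nor $d\mathfrak{p}+\mathfrak{p}d=\mathrm{id}$ survives literally, because $\nabla\cdot\bm u\in P_{k-1}(K)$ no longer remembers which summand it came from. You recognize this in the last paragraph, and your direct construction of a curl-preimage is exactly the right repair; just make sure the $\mathfrak{p}^{3}$ appearing in $\bm v+\mathfrak{p}^{3}(\nabla\cdot\bm b)$ is taken with base point $x_K$ (so that it cancels the $\mathfrak{p}^{3}_{x_K}(\nabla\cdot\bm b)$ produced by $\nabla\times\mathfrak{p}^{2}_{x_K}\bm b$), and note that the analogous check at $V_h^{r-1,k+1}(K)$ goes through because $\nabla\times\mathfrak{p}^{2}\bm v=0$ forces the bubble component of $\bm v$ to be polynomial, hence zero by the direct sum, after which $\mathfrak{p}^{2}_{0}\bm v_p=\mathfrak{p}^{2}_{0}\mathfrak{p}^{3}_{0}(\nabla\cdot\bm v_p)=0$.
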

\begin{proof}
	By the definition of the shape function spaces, it is easy to show that the sequence \eqref{local-complex} is a complex. It remains to show the exactness. 
	We only show the exactness at $V_h^{r-1,k+1}(K)$. To this end, we show that, for any $\bm v_h\in V^{r-1, k}_h(K)$ for which $\nabla\times\bm v_h=0$, there exists a $p_h\in \Sigma^{r}_h(K)$ s.t. $\bm v_h=\nabla p_h.$
Since $\bm v_h\in V^{r-1, k}_h(K)$, we have $\bm v_h=\nabla p_h+\mathfrak p^2 \bm w_h$ or $\bm v_h=\nabla p_h+\widetilde{\mathfrak p}^2\bm  w_h$ with $p_h\in \Sigma^{r}_h(K)$ and $\bm w_h\in \bm \Sigma^{k,+}_h(K)$. By the null-homotopy identity \eqref{null-homotopy} and the fact that $\nabla\times\bm\psi_{\bm w_h}=0$, $0=\nabla\times\bm v_h=\nabla\times\mathfrak p^2 \bm w_h=\bm w_h-\mathfrak p^3\nabla\cdot\bm w_h$, which leads to $\bm w_h=\mathfrak p^3\nabla\cdot\bm w_h$. By the complex property \eqref{complexproperty}, $\mathfrak p^2 \bm w_h=0$ ($\widetilde{\mathfrak p}^2 \bm w_h=0$ since $\bm\psi_{\mathfrak p^3\nabla\cdot\bm w_h}=0$). 
\end{proof}

From the definition, we see that $V_{h}^{r-1, k+1}(K)$ has two parts: one from the gradient on $\Sigma^{r}_h(K)$ and the other from the Poincar\'e operator on $\bm \Sigma_{h}^{k,+}$. The first part is easy to implement: we may remove the constant (kernel of gradient) from  the bases of  $\Sigma_{h}^{r}$ and apply gradient to the rest. The $\mathfrak{p}^2\bm \Sigma^{k,+}_h(K)$ part calls for more explanation as we cannot obtain a basis by applying the Poincar\'e operator to a basis of $\bm \Sigma_{h}^{k,+}$ (as the results are not linearly independent).  Now we show how to obtain a basis for the $\mathfrak{p}^2\bm \Sigma^{k,+}_h(K)$ part to implement $V_h^{r-1,k+1}(K)$. 

We first claim $\bm P_{k}(K)=\nabla\times \bm P_{k+1}(K)\oplus \mathfrak p^3 P_{k-1}(K)$. In fact, for all $\bm u\in \bm P_{k}(K)$, the null-homotopy identity \eqref{null-homotopy} leads to $\bm u=\nabla\times\mathfrak{p}^2\bm u+\mathfrak{p}^3\nabla\cdot\bm u\in\nabla\times \bm P_{k+1}(K)+ \mathfrak p^3 P_{k-1}(K)$. Moreover, if $\bm u\in \nabla\times \bm P_{k+1}(K)\cap \mathfrak p^3 P_{k-1}(K)$, then $\nabla\cdot\bm u=0$ and $\mathfrak p^2\bm u=0$, which follows from \eqref{null-homotopy} again that $\bm u=0$. 

We then have the decomposition $\bm \Sigma_h^{k,+}(K)=\nabla\times \bm P_{k+1}(K)\oplus \mathfrak p^3 P_{k-1}(K)\oplus B$, which leads to
\begin{align}\label{decomp-2}
\mathfrak{p}^{2}\bm \Sigma^{k,+}_{h}(K)=\mathfrak{p}^{2}\nabla\times\bm{P}_{k+1}(K)+\mathfrak{p}^{2}B+\mathfrak{p}^{2}\mathfrak{p}^{3}{P}_{k-1}(K)=\mathfrak{p}^{2}\nabla\times\bm {P}_{k+1}+\mathfrak{p}^{2}B,
\end{align}
where we used $\mathfrak{p}^{2}\mathfrak{p}^{3}=0$.

From the exactness and the decomposition of $\bm \Sigma_h^{k,+}(K)$, we obtain
\begin{align*}
\dim\mathfrak p^2\bm \Sigma_h^{k,+}&=\dim V^{r-1,k+1}_{h}(K)-\dim \nabla \Sigma^{r}_h(K)\nonumber\\
&=\dim \bm \Sigma_h^{k,+}(K)-\dim W_h^{k-1}(K)\\
&=\dim \nabla\times \bm P_{k+1}(K)+\dim \mathfrak p^3 P_{k-1}(K)+\dim B-\dim W_h^{k-1}(K)\\
&=\dim \nabla\times \bm P_{k+1}(K)+\dim B\\
&\geq \dim \mathfrak p^2\nabla\times \bm P_{k+1}(K)+\dim\mathfrak p^2 B,
\end{align*}
which together with \eqref{decomp-2} leads to 
\[\mathfrak p^2\bm \Sigma_h^{k,+}(K)=\mathfrak p^2\nabla\times\bm P_{k+1}(K)\oplus \mathfrak p^2 B.\]

Therefore, to implement $\mathfrak p^2 \bm \Sigma_h^{k,+}(K)$, we take the bases of $B$ and the bases of $\nabla\times \bm P_{k+1}(K)$, and apply the Poincar\'e operator $\mathfrak p^2$.  We then can implement $\widetilde{\mathfrak p}^2 \bm \Sigma_h^{k,+}(K)$.

To show the approximation property of the finite element space $V_h^{r-1,k+1}$, we demonstrate that $V_h^{r-1,k+1}(K)$ contains polynomials of certain degree.
\begin{lemma}\label{Vh} The inclusion $\bm P_{s}(K)\subseteq V^{r-1, k+1}_h(K)$ holds, where $s=\min\{r-1, k+1\}$. 
\end{lemma}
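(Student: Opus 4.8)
The plan is to exploit the definition $V_h^{r-1,k+1}(K)=\nabla\Sigma^r_h(K)\oplus\mathfrak p^2\bm\Sigma^{+,k}_h(K)$ together with the polynomial-preserving property of the Poincar\'e operators. Given a vector polynomial $\bm w\in\bm P_s(K)$ with $s=\min\{r-1,k+1\}$, I would use exactness of the de Rham complex and the null-homotopy identity $\nabla\mathfrak p^1+\mathfrak p^2\nabla\times=\operatorname{id}$ applied at the $1$-form level to write
\[
\bm w=\nabla(\mathfrak p^1\bm w)+\mathfrak p^2(\nabla\times\bm w).
\]
So it suffices to check that each of the two terms lies in the corresponding summand of $V_h^{r-1,k+1}(K)$.

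For the first term: $\bm w\in\bm P_s$ has degree $\le s\le r-1$, so by the polynomial-preserving property $\mathfrak p^1\bm w$ is a scalar polynomial of degree $\le s+1\le r$, hence $\mathfrak p^1\bm w\in P_r(K)=\Sigma^r_h(K)$ and $\nabla(\mathfrak p^1\bm w)\in\nabla\Sigma^r_h(K)$. For the second term: $\nabla\times\bm w\in\bm P_{s-1}$ with $s-1\le k+1-1=k$, and I must show $\mathfrak p^2(\nabla\times\bm w)\in\mathfrak p^2\bm\Sigma^{+,k}_h(K)$, for which it is enough to show $\nabla\times\bm w\in\bm\Sigma^{+,k}_h(K)$. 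This is where one inspects the definition of $\bm\Sigma^{+,k}_h(K)$ case by case: for $k\ge 3$ we have $\bm P_k(K)\subseteq\bm\Sigma^{+,k}_h(K)$ and $\nabla\times\bm w\in\bm P_{s-1}\subseteq\bm P_k$, done; for $k=2$ we have $\bm P_2(K)\subseteq\bm\Sigma^{+,2}_h(K)$ and $\nabla\times\bm w\in\bm P_{s-1}\subseteq\bm P_2$; for $k=1$ we have $\bm P_1(K)\subseteq\bm\Sigma^{+,1}_h(K)$ and $\nabla\times\bm w\in\bm P_{s-1}\subseteq\bm P_1$ since $s-1\le k=1$. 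In every case the containment holds.

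One subtlety to address is the choice of base point: the Poincar\'e operators $\mathfrak p^1,\mathfrak p^2$ in the definition of $V_h^{r-1,k+1}(K)$ use $W=0$ for the non-bubble part (per the remark), and the null-homotopy identity $\nabla\mathfrak p^1+\mathfrak p^2\nabla\times=\operatorname{id}$ holds for any fixed base point $W$, so I would simply apply the decomposition with $W=0$ throughout; then $\mathfrak p^2(\nabla\times\bm w)$ is literally in the image $\mathfrak p^2\bm P_k(K)\subseteq\mathfrak p^2\bm\Sigma^{+,k}_h(K)$. I expect the main (minor) obstacle is bookkeeping the three cases in the definition of $\bm\Sigma^{+,k}_h(K)$ and confirming that the degree count $s-1\le k$ always lands inside the polynomial part $\bm P_{\min\{k,2\}}$ or $\bm P_k$ that is guaranteed to sit inside $\bm\Sigma^{+,k}_h(K)$; there is no analytic difficulty, only a careful case check plus the invocation of the polynomial-preserving property of $\mathfrak p^1$.
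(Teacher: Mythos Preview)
Your proposal is correct and follows essentially the same argument as the paper: apply the null-homotopy identity to write $\bm w=\nabla(\mathfrak p^1\bm w)+\mathfrak p^2(\nabla\times\bm w)$, then use the polynomial-preserving property and the degree bounds $s\le r-1$, $s-1\le k$ to place each term in the appropriate summand. The paper's version is simply terser, noting directly that $\curl\bm P_s(K)\subseteq\bm P_k(K)\subseteq\bm\Sigma_h^{+,k}(K)$ without the case split, but the logic is identical.
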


\begin{proof}
From the null-homotopy property, $\bm P_{s}(K)= \grad\mathfrak{p}^{1}\bm P_{s}(K)+\mathfrak{p}^{2}\curl \bm P_{s}(K)$.
By definition, $V^{r-1, k+1}_h(K)=\grad\Sigma^{r}_h(K)+\mathfrak{p}^{2}\bm \Sigma_h^{k,+}(K)$. For $s=\min\{r-1, k+1\}$, we have $\mathfrak{p}^{1}\bm P_{s}(K)\subseteq P_{r}(K)= \Sigma^{r}_h(K)$ and $\curl \bm P_{s}(K)\subseteq \bm P_{k}(K)\subseteq  \bm \Sigma_h^{k,+}(K)$.  Therefore the desired inclusion holds. Similarly, we can prove the lemma for the case when $\widetilde{\mathfrak{p}}^2$ is involved.

\end{proof}


\section{Degrees of freedom and global finite element spaces}\label{sec:tetrahedral}

In this section, we construct $\grad\curl$-conforming finite elements and discrete Stokes complexes on tetrahedra.
The discrete complex with global finite element spaces is given by
\begin{equation}\label{discrete-complex-r-k}
\begin{tikzcd}
0 \arrow{r} &\mathbb{R} \arrow{r}{\subset} & \Sigma^{r}_h  \arrow{r}{\nabla} &V^{r-1,k+1}_{h} \arrow{r}{\nabla\times} &  \bm \Sigma_h^{k,+} \arrow{r}{\nabla\cdot} & W_h^{k-1}\arrow{r}&0.
 \end{tikzcd}
\end{equation}
 Taking $r=k$, $k+1$, and $k+2$ in \eqref{discrete-complex-r-k}  yields three versions of $\grad\curl$-conforming element spaces $V_h^{k-1,k+1}$, $V_h^{k,k+1}$, and $V_h^{k+1,k+1}$. Fig. \ref{fig:third-family} demonstrates the complex \eqref{discrete-complex-r-k} for the case $k=1$.
We define DOFs for each space in \eqref{discrete-complex-r-k}.

The DOFs for the Lagrange element $\Sigma^{r}_{h}(K)$ can be given as follows.
\begin{itemize}
\item Vertex DOFs $ M_{v}({u})$ at all the vertices  $v_{i}\in \mathcal V_h(K)$:
$$
M_{v}(u)=\left\{u\left({v}_{i}\right)\right\}.
$$
\item Edge DOFs $M_{e}(u)$ on all the edges $e_{i}\in \mathcal E_h(K)$:
\begin{align*} M_{e}(u)=\left\{ \int_{e_i} u v \mathrm{d} s\text { for all } v \in P_{r-2}(e_i) \right\}.
\end{align*}
 \item Face DOFs $M_{f}(u)$  on all the faces $f_{i}\in \mathcal F_h(K)$:
\begin{align*} M_{f}(u)=\left\{ \int_{f_i} u v \mathrm{d} A\text { for all } v \in P_{r-3}(f_i) \right\}.
\end{align*}
\item Interior DOFs $M_{K}(u)$ in the element $K$: 
$$M_{K}(u)=\left\{\int_{K} u v \mathrm{d} V \text{ for all } v \in P_{r-4}(K)
\right\}.$$
\end{itemize}

\begin{figure}
\includegraphics[width=\textwidth]{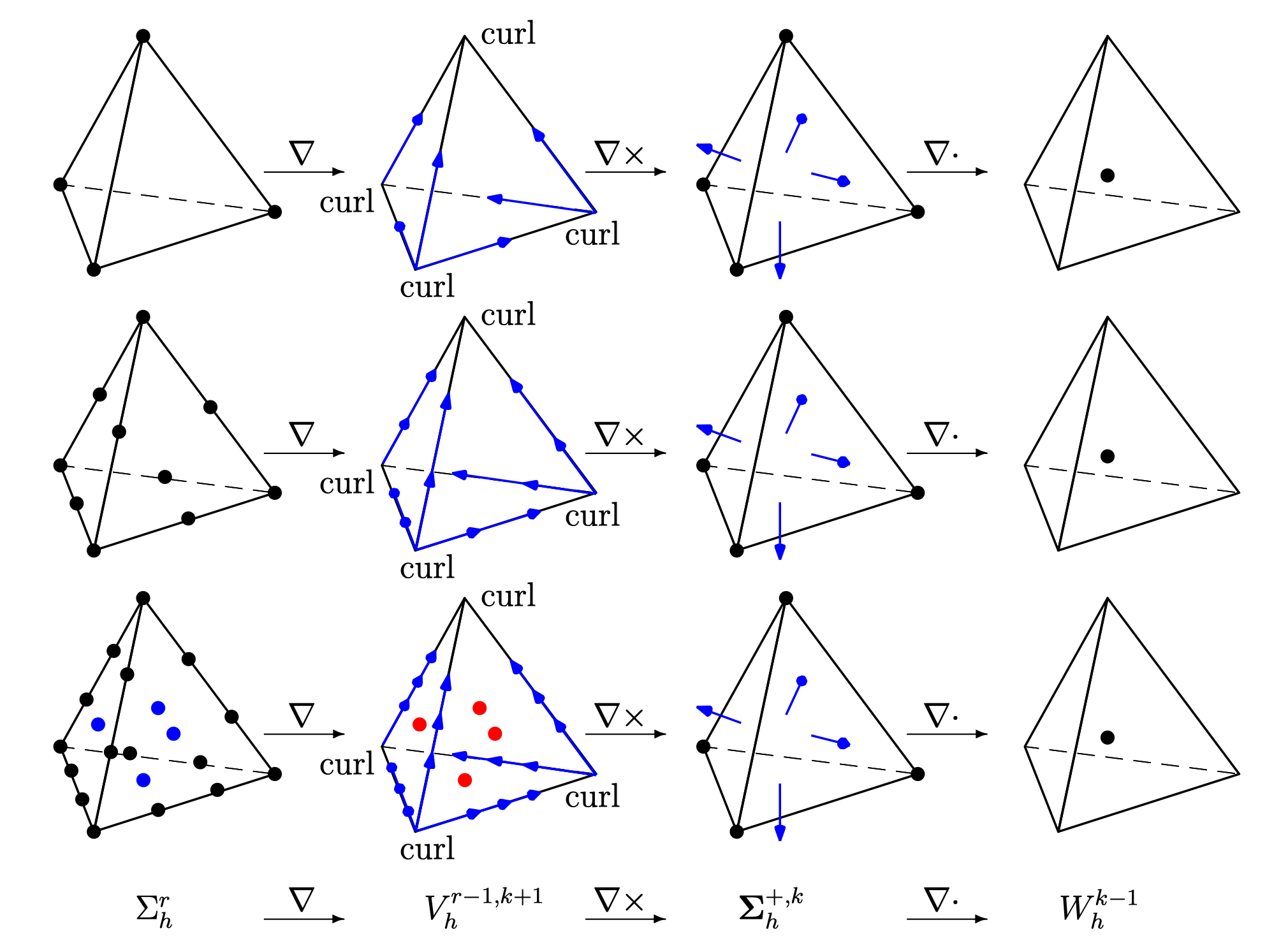}
\caption{The lowest-order ($k=1$) finite element complex \eqref{discrete-complex-r-k} on tetrahedra with $r=k$ in the first row, $r=k+1$ in the second row, and $r=k+2$ in the third row.}
\label{fig:third-family}
\end{figure}
We now equip the space $V^{r-1,k+1}_h(K)$ with the following DOFs:
\begin{itemize}
		\item Vertex DOFs $\bm M_{ {v}}({\bm u})$ at all  vertices $ {v}_{i}\in \mathcal V_h(K)$:
	\begin{equation}\label{tridef1-1}
	\bm M_{ {v}}({\bm u})=\left\{( \nabla\times {\bm u})(v_{i})\right\}.
	\end{equation}
	\item Edge DOFs $\bm M_{ {e}}({\bm u})$ on all edges $ {e}_i\in \mathcal E_h(K)$:
	\begin{align}
		 &\bm M_{ {e}}(  {\bm u})=	\left\{\int_{e_i}{\bm u}\cdot\bm \tau_iq\d s\text{ for all }{q}\in P_{r-1}( {e}_i)\right\}\nonumber\\
		 &\cup \left\{\int_{e_i}\nabla\times{\bm u}\cdot\bm q\d s\text{ for all }  \bm{q}\circ F_K\in \bm P_{k-2}(\hat{e}_i)\right\}.	\label{tridef1-2}\end{align}	
	\item Face DOFs $\bm M_{f}({\bm u})$ at all faces $ {f}_i\in \mathcal F_h(K)$ (with two mutually orthogonal unit vector $\bm \tau_i^1$ and $\bm\tau_i^2$ in the face $f_i$ and the unit  normal vector ${\bm n}_i$):
	\begin{align}
		 &\bm M_{f}({\bm u})=\left\{\int_{f_i}\nabla\times{\bm u}\cdot \bm n_i q\d A\text{ for all }{q}\in P_{k-3}( {f}_i)\slash \mathbb{R}\right\}\nonumber\\
		 &\quad\quad\cup \left\{\int_{f_i}\nabla\times{\bm u}\cdot \bm \tau_i^1 q\d A\text{ for all }{q}\in P_{k-3}( {f}_i)\right\}\nonumber\\
		&\quad\quad\cup  \left\{\int_{f_i}\nabla\times{\bm u}\cdot \bm \tau_i^2q\d A\text{ for all }{q}\in P_{k-3}( {f}_i)\right\}\label{tridef1-3}\\
		 &\cup \left\{\int_{f_{i}} \boldsymbol{u} \cdot \boldsymbol{q} \mathrm{d} A\text{ for all }\boldsymbol{q}\circ F_K=B_{K} \hat{\boldsymbol{q}},\hat{\boldsymbol{q}} \in P_{r-3}(\hat{f}_{i})\hat{\bm x}_{\hat f_i}\right\}\nonumber,
	\end{align}	
	where $\hat{\bm x}_{\hat f_i}=\big[\hat{\boldsymbol{x}}-(\hat{\boldsymbol{x}}\cdot\hat{\bm n}_i)\hat{\bm n}_i\big]\big|_{\hat f_i}$.
    \item Interior DOFs $\bm M_{ {K}}(  {\bm u})$ for the element $K$:
	\begin{align}\label{tridef1-4}
	&\quad\bm M_{K}({\bm u})=\left\{\int_{K} {\bm u}\cdot  {\bm q}\mathrm \d V\text{ for all }\bm q\circ F_K=B_K\hat{\bm q}, \hat{\bm q}\in  P_{r-4}(\hat K)\hat{\bm x}\right\}\nonumber\\
	 &\left\{\int_{K}  \nabla\times{\bm u}\cdot  {\bm q}\mathrm \d V\text{ for all }\bm q\circ F_K =B_K^{-T}\hat{\bm q}, \hat{\bm q}\in\hat{\nabla}\times\mathring V_h^{r-1,k+1}(\hat K)\right\},
		\end{align}
where $\mathring V_h^{r-1,k+1}(K)=\{\bm u\in V_h^{r-1,k+1}(K): \text{DOFs \eqref{tridef1-1}--\eqref{tridef1-3} vanish on }\bm u\}.$
		\end{itemize}
		

The DOFs for $\bm \Sigma_{h}^{k,+}(K)$ can be given similarly to $\Sigma_h^r(K)$ with some additional face or interior integration DOFs to take care of the bubble functions (see Lemma \ref{facebubble} and Lemma \ref{int_bubble_dof}). 
\begin{itemize}
\item Vertex DOFs $ \bm M_{v}(\bm {u})$ at all the vertices  $v_{i}\in \mathcal V_h(K)$:
\begin{align} \label{vector-sigma-DOFs1}
\bm M_{v}(\bm u)=\left\{\bm u\left({v}_{i}\right) \right\}.
\end{align}
\item Edge DOFs $\bm M_{e}(\bm u)$ on all the edges $e_{i}\in \mathcal E_h(K)$:
\begin{align} \bm M_{e}(\bm u)=\left\{\int_{e_i} \bm u \cdot \bm v \mathrm{d} s\text { for all } \bm  v \in \bm P_{k-2}(e_i)\right\}.
\end{align}
 \item Face DOFs $\bm M_{f}(\bm u)$ on all the faces $f_{i}\in \mathcal F_h(K)$:
\begin{align} \label{vector-sigma-DOFs3}
\bm M_{f}(\bm u)&=\left\{\int_{f_i} \bm u\cdot  \bm v \mathrm{d} A\text { for all } \bm v \in \bm P_{k-3}(f_i) \right\}\nonumber\\
&\cup\left\{ \int_{f_i} \bm u\cdot\bm n_i  \mathrm{d} A \text{ when } k=1,2\right\}.
\end{align}
\item Interior DOFs $\bm M_{K}(\bm u)$ in  the element $K$: 
\begin{align}\label{vector-sigma-DOFs4}
\bm M_{K}(\bm u)=&\left\{\int_{K} \bm u\cdot  \bm v \mathrm{d} V \text{ for all } \bm v =B_K^{-T}\hat{\bm v}, \hat{\bm v}\in\hat{\nabla}\times\mathring V_h^{r-1,k+1}(\hat K)\right\}\nonumber
	 \\
	&\cup \left\{\int_{K} \bm u \cdot\nabla v \mathrm{d} V \text{ for all } v \in  P_{k-1}(K)\slash \mathbb R\right\}.
	 \end{align}
\end{itemize}

The DOFs for $W_h^{k-1}(K)$ can be given as follows.
\begin{itemize}
\item Interior DOFs $M_{K}(u)$ in  the element $K$: 
\begin{align*}
	&\quad\quad \quad   M_{K}(u)=\left\{\int_{K}  u\cdot   v \mathrm{d} V \text{ for all } v \in  P_{k-1}(K) \right\}
\end{align*}
\end{itemize}

\begin{lemma}
	The DOFs for  $\bm \Sigma_h^{k,+}(K)$ are unisolvent. 
\end{lemma}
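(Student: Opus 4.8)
The plan is to run the standard two-stage argument for complex-based elements: a dimension count reducing unisolvence to injectivity, and then showing that vanishing of all the DOFs forces $\bm u=0$ by first killing $\curl\bm u$ and afterwards the remaining scalar potential. For the count, I would use the exactness of the local complex \eqref{local-complex} (Lemma \ref{local_exactness}): $\dim V^{r-1,k+1}_h(K)=\dim\nabla\Sigma^{r}_h(K)+\dim\mathfrak p^2\bm\Sigma^{+,k}_h(K)$, the sum being direct by \eqref{Vh2}, and $\dim\mathfrak p^2\bm\Sigma^{+,k}_h(K)=\dim\bm\Sigma^{+,k}_h(K)-\dim W^{k-1}_h(K)$ because exactness of the associated Poincar\'e sequence (Lemma \ref{lem:exactness}) gives $\ker\mathfrak p^2=\mathfrak p^3W^{k-1}_h(K)\cong W^{k-1}_h(K)$. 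A direct tally of the functionals \eqref{tridef1-1}--\eqref{tridef1-4} matches this number, so it suffices to prove that $\bm u\in V^{r-1,k+1}_h(K)$ on which all DOFs vanish is zero.

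\emph{Step 1: $\curl\bm u=0$.} Put $\bm z:=\curl\bm u\in\bm\Sigma^{+,k}_h(K)$; I would show that every DOF of $\bm\Sigma^{+,k}_h(K)$ in \eqref{vector-sigma-DOFs1}--\eqref{vector-sigma-DOFs4} vanishes on $\bm z$ and then invoke the unisolvence of $\bm\Sigma^{+,k}_h(K)$. Every bubble in $\bm\Sigma^{+,k}_h(K)$ vanishes on the edges of $K$ — the modified Bernardi--Raugel bubbles because they agree with $\bm b_i^0$ on $\partial K$ (Lemma \ref{facebubble}) and the interior bubbles because $\mathring B^k\subset\mathring{\bm P}^c_k(K^r)\subset\bm H^1_0(K)$ — so $\bm z|_e$ is a polynomial; the vertex DOFs \eqref{tridef1-1} make it vanish at the endpoints and the edge curl DOFs in \eqref{tridef1-2} annihilate its $\bm P_{k-2}(e)$-moments, whence, writing $\bm z|_e=b_e\bm g$ with $b_e$ the quadratic edge bubble and $\bm g\in\bm P_{k-2}(e)$ and testing against $\bm g$, positivity of $b_e$ gives $\bm z|_e=0$ on every edge. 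Stokes' theorem then gives $\int_{f_i}\bm z\cdot\bm n_i\,\d A=\oint_{\partial f_i}\bm u\cdot\bm\tau\,\d s$, a signed sum of the $q\equiv1$ instances of the tangential edge DOFs in \eqref{tridef1-2}, hence $0$; together with the normal and tangential face curl DOFs in \eqref{tridef1-3} this yields $\int_{f_i}\bm z\cdot\bm w\,\d A=0$ for all $\bm w\in\bm P_{k-3}(f_i)$. Since $\bm z|_{\partial f_i}=0$, on $f_i$ one has $\bm z|_{f_i}=(B_i|_{f_i})\,\bm h$ with $\bm h\in\bm P_{k-3}(f_i)$ when $k\ge3$, or $\bm z|_{f_i}=c_i(B_i|_{f_i})\bm n_i$ when $k\le2$ (the $c_i$ already killed by the Stokes identity), so positivity of $B_i|_{f_i}$ forces $\bm z|_{f_i}=0$ on every face, i.e.\ $\bm z|_{\partial K}=0$. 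Finally, $\bm z$ is divergence free with $\bm z|_{\partial K}=0$, so integration by parts makes the gradient-type interior DOFs of $\bm\Sigma^{+,k}_h(K)$ (tested against $\nabla q$, $q\in P_{k-3}(K)$, and $\nabla v'$, $v'\in\mathring{\mathcal S}_{k-1}(K)$) vanish on $\bm z$ automatically, while its Koszul-type interior DOFs (against $\bm P_{k-5}(K)\times\bm x$) coincide, up to the Piola pullback $B_K^{-T}$ and a unisolvence-irrelevant base point, with the interior curl DOFs in \eqref{tridef1-4}. Hence all DOFs of $\bm\Sigma^{+,k}_h(K)$ vanish on $\bm z$, so $\bm z=\curl\bm u=0$.

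\emph{Step 2: $\bm u=0$.} By exactness of \eqref{local-complex} at $V^{r-1,k+1}_h(K)$, $\bm u=\nabla p$ with $p\in P_r(K)$, unique up to an additive constant. The tangential edge DOFs in \eqref{tridef1-2} give $\int_{e_i}(\partial_{\tau_i}p)\,q\,\d s=0$ for all $q\in P_{r-1}(e_i)$; since $\partial_{\tau_i}p|_{e_i}\in P_{r-1}(e_i)$, this forces $\partial_{\tau_i}p|_{e_i}=0$, so $p$ is constant on the $1$-skeleton and may be normalised to vanish there. The face test fields $B_K\hat{\bm q}$ in \eqref{tridef1-3} are tangential to $f_i$ and their surface divergences span $P_{r-3}(f_i)$, so integration by parts on $f_i$ (no boundary term as $p|_{\partial f_i}=0$), together with $p|_{f_i}=(B_i|_{f_i})t_i$ and positivity of $B_i|_{f_i}$, forces $p|_{f_i}=0$ on every face; likewise the contravariant-Piola interior test fields in \eqref{tridef1-4} have divergences spanning $P_{r-4}(K)$, so integration by parts with $p=B_0 t$ and $B_0\ge0$ gives $p=0$. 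Hence $\bm u=\nabla p=0$, completing the proof.

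\emph{Main obstacle.} Step 1 is the delicate part: identifying the boundary traces of $\bm\Sigma^{+,k}_h(K)$ (which modified bubble vanishes on which edge or face), and — above all — recognising that the constant normal flux of $\curl\bm u$ across each face, which is not literally among the listed DOFs, is supplied by the tangential edge DOFs of $\bm u$ through Stokes' theorem, and that the gradient-type interior DOFs of $\bm\Sigma^{+,k}_h(K)$ are redundant on divergence-free, boundary-vanishing fields. Keeping the affine and Piola transformations ($F_K$, $B_K$, $B_K^{-T}$, $1/\det B_K$) and the base-point conventions straight so that the curl DOFs of $V^{r-1,k+1}_h(K)$ line up exactly with those of $\bm\Sigma^{+,k}_h(K)$ is the other place requiring care.
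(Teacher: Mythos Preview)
Your proof is correct and follows essentially the same two-stage route as the paper: a dimension count via exactness of the local complex, then killing $\nabla\times\bm u$ by showing all DOFs of $\bm\Sigma_h^{+,k}(K)$ vanish on it (with Stokes' theorem supplying the constant normal flux and integration by parts handling the gradient-type interior DOFs), followed by the Koszul/face-bubble argument for the scalar potential. The only difference is that in Step~1 you spell out the edge--face--interior vanishing of $\nabla\times\bm u$ explicitly, whereas the paper compresses this into a direct appeal to the unisolvence of $\bm\Sigma_h^{+,k}(K)$; your more detailed treatment in fact clarifies why the paper's identity $\int_{\partial K}\nabla\times\bm u\cdot\bm n\,v\,\mathrm{d}A=0$ holds for all $v\in P_{k-1}(K)$.
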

\begin{proof}
 The case of $k=1$ is proved in \cite[Lemma 4.3]{guzman2018inf}, and the case of $k=2$ can be proved similarly. We only prove the lemma for $k\geq 3$.
	For $\bm u\in\bm \Sigma_h^{k,+}(K)$, rewrite $\bm u=\bm w+\sum_i^{N_{k-1}}b_i\mathring{\bm \beta}_i$ with $b_i\in \mathbb R$, $\bm w\in \bm P_k(K)$, and $\mathring{\bm\beta}_i\in \mathring B^k$. Suppose that the DOFs \eqref{vector-sigma-DOFs1}--\eqref{vector-sigma-DOFs4} vanish on  $\bm u$. We must show that $\bm u=0$. Since $\mathring{\bm\beta}_i$ vanish on $\partial K$, $\bm w$ vanishes on $\partial K$ by the DOFs in \eqref{vector-sigma-DOFs1}--\eqref{vector-sigma-DOFs3}. The DOFs in the second set of \eqref{vector-sigma-DOFs4} leads to $\nabla\cdot\bm u=0$ since $\nabla\cdot\bm u\in P_{k-1}(K)\slash \mathbb R$. Therefore $\bm u=\nabla\times \bm v$ with $\bm v\in \mathring V^{r-1,k+1}_h(K)$. Using the DOFs in the first set of \eqref{vector-sigma-DOFs4}, we obtain $\bm u=0.$
	\end{proof}

\begin{lemma}\label{unisolvence}
The DOFs for  $V^{r-1, k+1}_{h}(K)$ are unisolvent. 
\end{lemma}
\begin{proof}
Since the complex \eqref{local-complex} is exact, we have
\begin{align}
	\dim{V_h^{r-1,k+1}(K)}&=\dim{\bm \Sigma_h^{k,+}(K)}+\dim{\Sigma_h^r(K)}-\dim{W_h^{k-1}(K)}-1.
\end{align}
We can check that the space of the DOFs has the same dimension. Then it suffices to show that if all the DOFs vanish on $\bm u\in V^{r-1,k+1}_{h}(K)$, then $\bm{u}=0$. To see this, we first show that $\nabla\times \bm{u}=0$. Using the properties of the Poincar\'{e} operators, we have $\nabla\times\mathfrak p^2 \bm \Sigma_h^{k,+}(K)\subset \bm \Sigma_h^{k,+}(K)$. By integration by parts, the following DOFs for $\bm\Sigma_h^{k,+}(K)$ vanish on $\nabla\times \bm{u}$: 
\[\int_{f_i}\nabla\times \bm u\cdot\bm n_i\d A=\int_{\partial f_i} \bm u\cdot\bm \tau_{\partial f_i}\d s=0,\]
and 
\[\int_K\nabla\times\bm u\cdot \nabla v\d V=\int_{\partial K}\nabla\times\bm u\cdot\bm n_{\partial K} v\d A=0\text{ for any } v\in P_{k-1}(K).\]
By the unisolvence of the DOFs for $\bm\Sigma_h^{k,+}(K)$, we get 
$\nabla\times\bm u=0 \text{ in } K.$ 

Therefore on each $f_i$, there exists a $ \phi_i\in P_r(f_i)$ such that $\bm n_i\times\bm{u}|_{f_i}\times\bm n_i=\nabla_{f_i}\phi_i$. Here $\nabla_{f_i}$ is the face gradient on $f_i$.  By the edge DOFs of $V^{r-1, k+1}_{h}(K)$, we get
$\bm u\cdot \bm {\tau}_i=0$ on the edge $\bm e_i$. Therefore $\phi_i$ is a constant on all the edges of $f_i$. Without loss of generality, we can choose this constant to be zero. Then  $\phi_i$ has the form $\phi_i=B_i|_{f_i}\psi_i$ with $\psi_i\in P_{r-3}(f_i)$. By the property of Koszul operators in 2D \cite[Theorem 7.1]{arnold2018finite}, for any function $\psi_i\in {P}_{r-3}(f_{i})$, there exists $\bm{q}_{i}\in  {P}_{r-3}(\hat f_{i})B_K\hat{\bm{x}}_{\hat f_{i}}$ satisfying $\bm{q}_{i}\perp \bm n_i$ and $\nabla_{f_{i}}\cdot\bm{q}_{i}=\psi_{i}$. By the DOFs in \eqref{tridef1-3}, we have
$$
0=(\boldsymbol{u}, \boldsymbol{q}_{i})_{f_{i}}=-(\phi_i, \nabla_{f_{i}} \cdot \boldsymbol{q}_{i})_{f_{i}}=-\left(B_i|_{f_i}\psi_i, \psi_i\right)_{f_{i}}.
$$
This implies that $\psi_i=0$, i.e., 
$\boldsymbol{u} \times \boldsymbol{n}_{i}=0$ on $f_i$. 

Since $\nabla\times\bm u=0$ and $\boldsymbol{u} \times \boldsymbol{n}_{i}=0$ on $f_i$, there exists $\phi=B_0\psi$ with $\psi\in P_{r-4}(K)$ such that $\bm u=\nabla \phi$. We choose $\bm{q}\in P_{r-4}(K)B_K\hat{\bm x}$ such that $\nabla\cdot\bm q=\psi$. Then 
\[0=\left(\bm u,\bm q\right)=\left(\nabla \phi,\bm q\right)=-\left(\phi,\nabla\cdot\bm q\right)=-\left(B_0{\psi},{\psi}\right).\]
This implies that $\psi=0$ and hence $\phi=0$ and $\bm u=0$.

\end{proof}

Equipping the local spaces with the above DOFs, we obtain the global finite element spaces $\Sigma_h^{r}$, $V_h^{r-1,k+1}$, $\bm \Sigma_h^{k,+}$, and $W_h^{k-1}$. 

\begin{lemma}\label{conformity} The following conformity holds:
$$
V^{r-1, k+1}_{h}\subset H(\grad\curl; \Omega).
$$
\end{lemma}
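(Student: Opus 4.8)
The plan is to show that a function $\bm u \in V_h^{r-1,k+1}$ lies in $H(\grad\curl;\Omega)$ by checking the two continuity conditions that this space requires across each interior face $f \in \mathcal F_h$: namely that $\bm u$ has continuous tangential trace $\bm u \times \bm n$ (so that $\bm u \in H(\curl;\Omega)$) and that $\curl \bm u$ has a well-defined trace in $\bm H^1$-sense, i.e. $\curl\bm u$ is globally continuous (so that $\curl\bm u \in \bm H^1(\Omega)$). Since the global spaces are assembled by identifying the shared DOFs on vertices, edges, and faces, the strategy is to verify that the DOFs listed in \eqref{tridef1-1}--\eqref{tridef1-4} control exactly the traces needed, element by element.

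First I would recall that on a single face $f$ of an element $K$, the relevant quantities are determined by: (i) the tangential components $\bm u \cdot \bm\tau_i$ on the edges of $f$ together with the face DOFs $\int_f \bm u \cdot \bm q\,dA$ for $\bm q = B_K\hat{\bm q}$ with $\hat{\bm q} \in P_{r-3}(\hat f)[\hat{\bm x} - (\hat{\bm x}\cdot\hat{\bm n})\hat{\bm n}]$, which together pin down the tangential trace $\bm u\times\bm n|_f$; and (ii) the values $(\curl\bm u)(v_i)$ at the vertices of $f$, the edge DOFs $\int_{e_i}\curl\bm u\cdot\bm q\,ds$ along the edges of $f$, and the face DOFs $\int_f \curl\bm u\cdot\bm n_i\,q\,dA$, $\int_f\curl\bm u\cdot\bm\tau_j\,q\,dA$, which together pin down the full trace of $\curl\bm u|_f$. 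The key structural fact I would invoke is Lemma \ref{local_exactness} (local exactness) and the properties of the Poincaré operators: because $\curl\mathfrak p^2\bm\Sigma_h^{+,k}(K)\subset\bm\Sigma_h^{+,k}(K)$, the curl of any shape function in $V_h^{r-1,k+1}(K)$ is itself in $\bm\Sigma_h^{+,k}(K)$, which is (at least) $\bm H^1$-conforming because its DOFs include vertex values, and its face/edge DOFs determine the full trace on each face. So one shows that the $V_h$-DOFs on a face determine $\curl\bm u|_f$ as an element of $\bm\Sigma_h^{+,k}(f)$, and hence $\curl\bm u$ matches across $f$; this gives $\curl\bm u \in \bm H^1(\Omega)$. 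Separately, the tangential-trace DOFs of $V_h$ are precisely those of a curl-conforming element (Nédélec-type, enriched by the $B_K\hat{\bm q}$ face bubbles), so $\bm u\times\bm n$ matches across $f$ and $\bm u \in H(\curl;\Omega)$.

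Concretely the steps are: (1) fix an interior face $f = \partial K_1 \cap \partial K_2$ and let $\bm u_1, \bm u_2$ be the restrictions; by construction of the global space, all shared DOFs on $f$ and its sub-simplices agree on $\bm u_1$ and $\bm u_2$. (2) Apply a unisolvence argument on the face: the tangential trace $(\bm u_1 - \bm u_2)\times\bm n$ is a polynomial vector field on $f$ whose edge moments and face moments (against the $B_K\hat{\bm q}$ test functions, which span the needed polynomial space on $f$ by the Koszul/divergence argument already used in the unisolvence proof) all vanish, hence it is zero. (3) Similarly $(\curl\bm u_1 - \curl\bm u_2)|_f$ is a polynomial whose vertex values, edge moments, and face moments against $\bm n_i q$, $\bm\tau_1 q$, $\bm\tau_2 q$ all vanish; since these are exactly a unisolvent set for the trace space of $\bm\Sigma_h^{+,k}$ on $f$, the full trace of $\curl\bm u$ is continuous across $f$. (4) Conclude $\bm u \in H(\curl;\Omega)$ from (2) and $\curl\bm u \in \bm H^1(\Omega)$ from (3) (continuity of a piecewise-polynomial vector field across all faces, plus piecewise $\bm H^1$ regularity, gives global $\bm H^1$), whence $\bm u \in H(\grad\curl;\Omega)$.

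The main obstacle I anticipate is step (3): verifying that the face DOFs of $V_h$ involving $\curl\bm u$ — specifically the combination $\int_f \curl\bm u\cdot\bm n_i q$ for $q \in P_{k-3}(f)/\mathbb R$ together with the two tangential moments $\int_f\curl\bm u\cdot\bm\tau_j q$ for $q\in P_{k-3}(f)$, plus the edge and vertex data — really do determine the \emph{entire} trace of $\curl\bm u|_f$ as seen from $\bm\Sigma_h^{+,k}$, including the contribution of the face and interior bubbles of $\bm\Sigma_h^{+,k}$ whose normal components on $f$ are nonzero (the modified Bernardi--Raugel bubbles $\bm\beta_i^0$ for $k=1,2$). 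One must check that $\curl\bm u$ restricted to a face never activates a bubble whose trace is not already controlled — equivalently that $\curl\bm u|_f$ lies in the \emph{trace space} of $\bm\Sigma_h^{+,k}$ on $f$, which is a genuine subspace of all polynomials of the relevant degree — and that the chosen moments are unisolvent on exactly that trace space. This requires carefully matching the degree counts (the factor of $1/\operatorname{area}(f_i)$ and the quotient by $\mathbb R$ in the $\bm n_i$-moment are precisely the bookkeeping devices that make this work) and using that the trace of a function in $\bm\Sigma_h^{+,k}(K)$ on a face $f$ is a standard Lagrange-type trace since the interior bubbles vanish on $\partial K$ and the $\bm\beta_i^0$ agree with $\bm b_i^0$ on $\partial K$. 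Once this trace-space identification is in place, the rest is a routine unisolvence-plus-matching argument.
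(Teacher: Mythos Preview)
Your approach is correct and is essentially a detailed unpacking of the paper's proof, which is the single line $\nabla\times V_h^{r-1,k+1}\subseteq \bm\Sigma_h^{+,k}\subset\bm H^1(\Omega)$. Your steps (2) and (3) are precisely what makes that global inclusion hold: the N\'ed\'elec-type tangential DOFs give $\bm u\in H(\curl;\Omega)$ (so the distributional curl equals the piecewise curl), and the curl-related DOFs on vertices, edges, and faces coincide with the boundary DOFs of $\bm\Sigma_h^{+,k}$ applied to $\nabla\times\bm u$, so the piecewise curl lands in the global space $\bm\Sigma_h^{+,k}$.

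Your anticipated obstacle dissolves once you note that the only face DOF of $\bm\Sigma_h^{+,k}$ not literally listed among \eqref{tridef1-1}--\eqref{tridef1-3} is the constant normal moment $\int_{f}\nabla\times\bm u\cdot\bm n$, and this is recovered from the edge tangential DOFs of $\bm u$ via Stokes' theorem, $\int_{f}\nabla\times\bm u\cdot\bm n=\int_{\partial f}\bm u\cdot\bm\tau$ (the same identity already invoked in the unisolvence proof). Since the interior bubbles of $\bm\Sigma_h^{+,k}$ vanish on $\partial K$ and the face bubbles $\bm\beta_i^0$ agree with the polynomial $\bm b_i^0$ there, the trace space on each face is the ordinary Lagrange trace, for which the listed DOFs are unisolvent; no uncontrolled bubble contribution can appear.
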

\begin{proof}
If we can verify $V^{r-1, k+1}_{h}\subset H(\curl; \Omega),$ then the conformity follows from $\nabla\times V^{r-1, k+1}_{h}\subseteq \bm{\Sigma}^{k,+}_h\subset \bm H^{1}(\Omega)$. To this end, we must show $\bm u\times\bm n_i=0$ for all $f_i\in\mathcal F_h(K)$ if the DOFs  \eqref{tridef1-1}-\eqref{tridef1-3} vanish on $\bm u\in V^{r-1, k+1}_{h}(K)$. From the vanishing DOFs involving $\nabla\times\bm u$ and $\int_{f_i}\nabla\times\bm u\cdot\bm n_i\d A=\int_{\partial f_i} \bm u\cdot\bm \tau_{\partial f_i}\d s=0$, we have $\nabla\times\bm u=0$ on  $\partial K$. Proceeding as in the proof of Lemma \ref{unisolvence}, we can show that $\bm u\times\bm n_i=0$ on each $f_i$.

\end{proof}

\begin{remark}
When $r=k$ with $k\geq 1$ and $r=k+1,k+2$ with $k=1,2$, without modifying the definition of Poincar\'e operator $\mathfrak p^2$,  the space $V_h^{r-1,k+1}$ is non-conforming in $H(\curl;\Omega)$, but $\nabla\times V_h^{r-1,k+1}$ is conforming in $\bm H^1(\Omega)$. The elements in these cases still work. See the numerical elements in Section \ref{numericalexperiment}.
\end{remark}


\section{Global Finite element complexes}\label{sec:dofs}
We now present properties of the complex \eqref{discrete-complex-r-k} with the global finite element spaces. The first property we will show is the surjectivity of $\nabla\cdot: \bm \Sigma_h^{k,+}\rightarrow W_h^{k-1}$. To this end, we need the following property for the local complex.

\begin{lemma}\label{local-ontoness}
For any $q\in W_{h}^{k-1}(K)\cap \mathring L^2(K)$, there exists $\bm v\in \bm \Sigma_{h}^{k,+}(K)\cap \bm H_0^1(K)$ such that  $\nabla \cdot \bm v=q$ and $\|\bm v\|_{1,K}\leq C\|q\|_{K}$.
\end{lemma}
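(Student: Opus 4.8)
The plan is to decompose $q$ into a piece handled by the modified Bernardi–Raugel face bubbles and a remainder that lies in $\mathring P_{k-1}(K^r)$, for which Lemma~\ref{constructingv} produces a divergence-preimage in $\mathring{\bm P}^c_k(K^r)$. More precisely, recall that $W_h^{k-1}(K)=P_{k-1}(K)$, whereas the bubbles $\bm\beta_i^0$ (and, for $k=2$, the interior bubbles $\mathring B^2$) supply divergences only in $P_0(K)$ and in $\mathring{\mathcal S}_{k-1}(K)$ respectively. So the first step is to observe that $\nabla\cdot\bm\Sigma_h^{+,k}(K)$, intersected with the constraint of vanishing boundary trace on the normal component, is large enough: the shape function space $\bm\Sigma_h^{+,k}(K)$ contains $\bm P_1(K)$ plus the four modified face bubbles $\bm\beta_i^0$ when $k=1$; in general it contains $\bm P_k(K)$ plus interior bubbles $\mathring B^k$ (for $k\geq 3$), or $\bm P_2(K)\oplus\{\bm\beta_i^0\}\oplus\mathring B^2$ when $k=2$.

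The key steps, in order. First I would treat the $k\geq 3$ (and analogously $k=2$) case via the Alfeld split. Since $\bm P_k(K)\subseteq \bm P_k^c(K^r)$ and $\mathring B^k\subseteq\mathring{\bm P}^c_k(K^r)$, we have $\bm\Sigma_h^{+,k}(K)\subseteq \bm P_k^c(K^r)$; the relevant subspace with zero boundary trace, $\bm\Sigma_h^{+,k}(K)\cap\bm H_0^1(K)$, contains $\mathring{\bm P}^c_k(K^r)$ (the interior bubbles $\mathring B^k$ plus the interior functions of $\bm P_k$ vanishing on $\partial K$ — actually one should note $\mathring{\bm P}^c_k(K^r)\supseteq \mathring B^k$ by construction, and that suffices). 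Given $q\in P_{k-1}(K)\cap\mathring L^2(K)$, one checks $q\in\mathring P_{k-1}(K^r)$ trivially (a global polynomial is piecewise polynomial on $K^r$), so Lemma~\ref{constructingv} yields $\bm v\in\mathring{\bm P}^c_k(K^r)$ with $\nabla\cdot\bm v=q$ and $\|\bm v\|_{1,K}\leq C\|q\|_K$. The only subtlety is that $\bm v$ lies in the macroelement space $\mathring{\bm P}^c_k(K^r)$, which is contained in $\bm\Sigma_h^{+,k}(K)\cap\bm H_0^1(K)$ precisely because $\mathring B^k$ was defined (via the corollary to Lemma~\ref{constructingv}) to be a subset of $\mathring{\bm P}^c_{k+1}$-type spaces of matching dimension; I would spell out this containment carefully, since it is the crux.

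The $k=1$ case needs a separate short argument: here $W_h^0(K)=P_0(K)$, so $q$ is a constant with zero mean, hence $q=0$, and we may take $\bm v=0$; the statement is vacuous. For $k=2$ one splits $q=\bar q+(q-\bar q)$ where $\bar q$ is a suitable interior-bubble-divergence correction: write $q\in P_1(K)\cap\mathring L^2(K)$; choose $\bm v_1\in\operatorname{span}\{\mathring{\bm \beta}_j^2\}=\mathring B^2$ with $\nabla\cdot\bm v_1$ equal to the $\mathring{\mathcal S}_1(K)$-part of $q$ (all of $q$, since $\mathring{\mathcal S}_1(K)$ spans the mean-free linear polynomials), giving $\nabla\cdot\bm v_1=q$ directly with the bound from Lemma~\ref{constructingv}; the face bubbles are not even needed. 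So in fact for every $k$ the interior/macroelement bubbles alone do the job, and the face bubbles $\bm\beta_i^0$ are there for the global surjectivity of $\nabla\cdot$ on $W_h^{k-1}$, not for this local lemma. I expect the main obstacle to be bookkeeping the precise inclusion $\mathring{\bm P}^c_k(K^r)\subseteq\bm\Sigma_h^{+,k}(K)\cap\bm H_0^1(K)$ (equivalently, that the chosen interior bubbles span enough of the macroelement divergence range), rather than any genuine analytic difficulty — the stability bound is immediate from Lemma~\ref{constructingv} once the containment is in place.
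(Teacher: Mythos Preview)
Your approach has a genuine gap: the containment you rely on, $\mathring{\bm P}_k^c(K^r)\subseteq\bm\Sigma_h^{+,k}(K)\cap\bm H_0^1(K)$, is false. The inclusion runs the other way: by construction $\mathring B^k$ is a small subspace of $\mathring{\bm P}_k^c(K^r)$, not the reverse. For instance, when $k=3$ one has $\dim\mathring B^3=\dim\mathcal S_2=9$ while a direct count on the Alfeld split gives $\dim\mathring{\bm P}_3^c(K^r)=45$. Consequently the $\bm v$ produced by Lemma~\ref{constructingv} lies in $\mathring{\bm P}_k^c(K^r)$ but has no reason to lie in $\bm\Sigma_h^{+,k}(K)$. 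Your fallback of using $\mathring B^k$ alone happens to work for $k\le 3$ because then $P_{k-1}(K)\cap\mathring L^2(K)=\mathring{\mathcal S}_{k-1}(K)$ and the uniqueness in Lemma~\ref{constructingv} places the preimage in $\mathring B^k$; but for $k\ge 4$ one has $\dim\mathring{\mathcal S}_{k-1}<\dim\bigl(P_{k-1}\cap\mathring L^2\bigr)$, so $\mathring B^k$ cannot cover all of $q$ and you must also bring in the polynomial bubbles $B_0\bm P_{k-4}(K)\subset\bm P_k(K)\cap\bm H_0^1(K)$. Showing that $\nabla\cdot$ maps $B_0\bm P_{k-4}\oplus\mathring B^k$ onto $P_{k-1}\cap\mathring L^2$ is not just bookkeeping.

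The paper avoids the macroelement space entirely by a Fortin-type construction through the degrees of freedom. Take any $\bm w\in\bm H_0^1(K)$ with $\nabla\cdot\bm w=q$, and define $\bm v\in\bm\Sigma_h^{+,k}(K)$ by setting the interior DOFs $\int_K\bm v\cdot\nabla p\,\mathrm dV$ (for $p\in P_{k-1}(K)$, which the DOFs \eqref{vector-sigma-DOFs4} supply via $\nabla P_{k-3}$ and $\nabla\mathring{\mathcal S}_{k-1}$) equal to those of $\bm w$, and all remaining DOFs to zero. The vanishing boundary DOFs force $\bm v\in\bm H_0^1(K)$; since the bubbles are designed so that $\nabla\cdot\bm v\in P_{k-1}(K)$, integration by parts gives $(\nabla\cdot\bm v,p)_K=(q,p)_K$ for every $p\in P_{k-1}(K)$, hence $\nabla\cdot\bm v=q$. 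The bound follows from a scaling argument. This is what you need for general $k$.
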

\begin{proof}
	For a fixed $q\in W_{h}^{k-1}(K)\cap \mathring L^2(K)$, there exists $\bm w\in \bm H_0^1(K)$ such that (see e.g. \cite[Corollary 2.4]{Girault2012Finite})
	\[\nabla\cdot \bm w=q \text{ in }\Omega.\]
Let $\bm v\in \bm \Sigma_{h}^{k,+}(K)$ be the unique function that satisfies
\begin{align*}
	\int_K\bm v\cdot \nabla p\d V=\int_K\bm w\cdot \nabla p\d V, \ \forall 
	p \in P_{k-1}(K),
\end{align*}
with the remaining DOFs in \eqref{vector-sigma-DOFs1}-\eqref{vector-sigma-DOFs4} vanishing on $\bm{v}$.
Then $\bm v\in \bm \Sigma_{h}^{k,+}(K)\cap \bm H_0^1(K).$ Moreover, integrating by parts, we have
\begin{align*}
	&(\nabla\cdot\bm v, p)=(\bm v,\nabla p)=(\bm w,\nabla p)=(\nabla\cdot\bm w, p)=(q, p), \ \forall 
	p \in P_{k-1}(K)\slash \mathbb R,\\
	&(\nabla\cdot\bm v, 1)=\langle \bm v\cdot\bm n,1\rangle=0=(q,1).
\end{align*}
This implies $\nabla\cdot\bm v-q=0$ since $\nabla\cdot\bm v-q\in P_{k-1}(K)$.

We now prove  $\|\bm v\|_{1,K}\leq C\|q\|_{K}$ by a scaling argument. Denote \[n_{k-1}=\dim P_{k-1}(K),\] we can express $\bm v$ as
\[\bm v=\sum_{i=2}^{n_{k-1}}(\bm w,\nabla p_i)\bm N_i,\] 
where $\{p_i\}_{i=2}^{n_{k-1}}$ is a set of basis functions of $P_{k-1}(K)\slash \mathbb R$ and $\bm N_i$ is the dual basis of $p_i$ with respect to the DOFs $(\bm w,\nabla p_i)$, i.e., $(\bm N_i, \nabla p_j)=\delta_{ij}$. Setting $\hat{\bm v}=\det(B_K)B_K^{-1}\bm v\circ F_K$ and $\hat p=p\circ F_K$ with $B_K$ and $F_K$ defined in \eqref{mapping-domain}, we obtain
\begin{align*}
	&\|\bm v\|^2_{1,K}\leq Ch_K^{-3}\|\hat{\bm v}\|^2_{1,\hat K}\leq Ch_K^{-3}\sup_{2\leq i\leq n_{k-1}}|(\hat{\bm w},\hat{\nabla} \hat p_i)|^2\\
	=Ch_K^{-3}&\sup_{2\leq i\leq n_{k-1}}|(\hat{\nabla}\cdot\hat{\bm w}, \hat p_i)|^2\leq Ch_K^{-3}\|\hat{\nabla}\cdot\hat{\bm w}\|^2_{\hat K}
	\leq C\|{\nabla}\cdot{\bm w}\|^2_{K}=C\|q\|^2_{K}.
\end{align*}
\end{proof}

\begin{lemma}\label{global-ontoness}
	For any $q\in W_{h}^{k-1}$, there exists $\bm v\in \bm \Sigma_{h}^{k,+}$ such that  $\nabla \cdot \bm v=q$ and $\|\bm v\|_{1}\leq C\|q\|$.
\end{lemma}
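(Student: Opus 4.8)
The plan is to upgrade the local surjectivity result of Lemma \ref{local-ontoness} to a global statement by the classical Fortin-type / partition-of-unity argument. First I would reduce to the mean-zero case: since $q\in W_h^{k-1}$ is piecewise polynomial, write $\bar q$ for its elementwise $L^2$-projection onto constants, so that $q-\bar q$ is elementwise mean-zero. Using the global exactness/surjectivity of the divergence on $\bm H^1_0(\Omega)$ (e.g. \cite[Corollary 2.4]{Girault2012Finite}) there is $\bm w\in\bm H^1_0(\Omega)$ with $\nabla\cdot\bm w=q-\bar q$ and $\|\bm w\|_1\le C\|q-\bar q\|\le C\|q\|$.

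Second, I would correct $\bm w$ by a Fortin-type interpolation. On each $K$ define $\bm w_h|_K\in\bm\Sigma_h^{+,k}(K)$ by matching the DOFs $\int_K \bm w_h\cdot\nabla p\,\d V=\int_K\bm w\cdot\nabla p\,\d V$ for $p\in P_{k-1}(K)/\mathbb{R}$, and setting all remaining DOFs in \eqref{vector-sigma-DOFs1}--\eqref{vector-sigma-DOFs4} to zero; this is exactly the local construction in Lemma \ref{local-ontoness}, so $\bm w_h|_K\in\bm H^1_0(K)$, hence $\bm w_h\in\bm\Sigma_h^{+,k}\cap\bm H^1_0(\Omega)$ globally (vanishing traces on every face), $\nabla\cdot\bm w_h=q-\bar q$, and $\|\bm w_h\|_1\le C\|\bm w\|_1\le C\|q\|$ by summing the local scaling bounds proved in Lemma \ref{local-ontoness}.

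Third, I would handle the piecewise-constant remainder $\bar q$. This is the part I expect to be the main obstacle, since $\bar q\in W_h^{k-1}$ is \emph{not} globally mean-zero in general and cannot be hit by a single $\bm H^1_0$ correction. The fix is to use the face bubble DOFs: because $\bm\Sigma_h^{+,k}$ contains the (modified) Bernardi--Raugel face bubbles $\bm\beta_i^0$ with $\nabla\cdot\bm\beta_i^0\in P_0(K)$ and nonzero normal flux through a single face, one can, face by face, adjust the value of $\nabla\cdot$ on the two adjacent tetrahedra while preserving the global $\bm H^1$-conformity; equivalently, the standard Bernardi--Raugel pair $\bm P_1+\{\bm b_i^0\}$ vs. $P_0$ is already inf-sup stable, and the modified bubbles keep this property with piecewise-constant divergence. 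Concretely, pick $\bm w_1\in\bm\Sigma_h^{+,k}$ built from face bubbles only, determined by its face-normal-flux DOFs so that $\nabla\cdot\bm w_1=\bar q$; stability of the Bernardi--Raugel construction (as in \cite{guzman2018inf}) gives $\|\bm w_1\|_1\le C\|\bar q\|\le C\|q\|$. Alternatively, and perhaps more cleanly, one can quote the inf-sup stability of the pair $\bm\Sigma_h^{+,k}$--$W_h^{k-1}$ (the pair in the complex) directly: the claimed estimate $\|\bm v\|_1\le C\|q\|$ is precisely the statement that $\nabla\cdot:\bm\Sigma_h^{+,k}\to W_h^{k-1}$ is onto with a bounded right inverse, which is equivalent to the inf-sup condition asserted in the introduction.

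Finally, set $\bm v=\bm w_h+\bm w_1$. Then $\bm v\in\bm\Sigma_h^{+,k}$, $\nabla\cdot\bm v=(q-\bar q)+\bar q=q$, and $\|\bm v\|_1\le\|\bm w_h\|_1+\|\bm w_1\|_1\le C\|q\|$, which completes the proof.
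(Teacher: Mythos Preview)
Your argument has a genuine gap in Step~2. You define $\bm w_h|_K$ by matching the interior DOFs of the \emph{global} right inverse $\bm w\in\bm H_0^1(\Omega)$, and then assert $\nabla\cdot\bm w_h=q-\bar q$ ``as in Lemma~\ref{local-ontoness}''. But the proof of Lemma~\ref{local-ontoness} uses that the auxiliary function lies in $\bm H_0^1(K)$, so that integration by parts on $K$ produces no boundary term. Your $\bm w$ is only in $\bm H_0^1(\Omega)$, not $\bm H_0^1(K)$; on an interior element the term $\langle\bm w\cdot\bm n,\,p\rangle_{\partial K}$ does not vanish, and the identity $(\nabla\cdot\bm w_h,p)_K=(\nabla\cdot\bm w,p)_K$ fails. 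The fix is easy---drop $\bm w$ entirely and apply Lemma~\ref{local-ontoness} directly to $(q-\bar q)|_K\in W_h^{k-1}(K)\cap\mathring L^2(K)$---but as written the step is incorrect.

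Step~3 is where the real work lies, and here you essentially defer it. The ``alternative'' of quoting the inf-sup condition is circular: in this paper the inf-sup Corollary~\ref{cor:inf-sup} is \emph{derived from} the present lemma, not the other way around. Citing Bernardi--Raugel or \cite{guzman2018inf} for the piecewise-constant part is more legitimate, but those are stated for $\bm H_0^1$ velocities and mean-zero pressures, whereas here $\bar q$ need not have zero mean; some additional (admittedly routine) argument is needed. The paper avoids all of this by a different decomposition: take $\bm w\in\bm H^1(\Omega)$ with $\nabla\cdot\bm w=q$ (the full $q$), form the Scott--Zhang interpolant $\bm I_h\bm w\in\bm\Sigma_h^k$, add a face-bubble correction $\bm v_1$ determined by the face-normal-flux DOFs $\int_{f_i}\bm v_1\cdot\bm n_i=\int_{f_i}(\bm w-\bm I_h\bm w)\cdot\bm n_i$ so that $q-\nabla\cdot(\bm I_h\bm w+\bm v_1)$ has zero mean on every $K$, and only then invoke Lemma~\ref{local-ontoness} elementwise. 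The bound $\|\bm v_1\|_{1,K}\le C\|\bm w\|_{1,\omega(K)}$ follows from a scaling argument together with the approximation properties of the Scott--Zhang operator, making the proof self-contained.
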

\begin{proof}
Given $q\in W_{h}^{k-1}\subset \bm L^2(\Omega)$, according to \cite[Theorem 2]{arnold2020complexes}, there exists $\bm w\in \bm H^1(\Omega)$ satisfying
			$\nabla\cdot \bm w=q$ and $\|\bm w\|_{1}\leq C\|q\|$. Let $\bm I_h\bm w\in \bm \Sigma_h^k\subset\bm \Sigma_h^{k,+}$ denote the Scott-Zhang interpolation of $\bm w$ (see \cite[(2.13)]{scott1990finite} for its definition), where $\bm \Sigma_h^k$ is the vector-valued Lagrange finite element space of degree $k$. 
We also let $\bm v_1\in \bm \Sigma_{h}^{k,+}$ be the unique function that satisfies
\begin{align*}
	\int_{f_i}\bm v_1\cdot\bm n_i \d A=\int_{f_i}(\bm w-\bm I_h\bm w)\cdot\bm n_i \d A,\  \forall f_i\in \mathcal F_h,
\end{align*}
with other DOFs in \eqref{vector-sigma-DOFs1}-\eqref{vector-sigma-DOFs4} vanishing on $\bm{v}_{1}$.
Then we have, for any $K\in \mathcal T_h$,
\[(\nabla \cdot \bm v_1+\nabla \cdot \bm I_h\bm w,1)_K=\langle\bm v_1\cdot \bm n+\bm I_h\bm w\cdot\bm n,1\rangle_{\partial K}=\langle\bm w\cdot \bm n,1\rangle_{\partial K}=(\nabla \cdot \bm  w,1)_K=(q,1)_K,\]
 which means $(q-\nabla \cdot \bm v_1-\nabla \cdot \bm I_h\bm w)|_K\in W_{h}^{k-1}(K)\cap \mathring L^2(K)$. By Lemma \ref{local-ontoness}, there exists $\bm v_{2,K}\in \bm \Sigma_{h}^{k,+}(K)\cap \bm H_0^1(K)$ such that 
 $$
 	\nabla\cdot \bm v_{2,K}=(q-\nabla \cdot \bm v_1-\nabla \cdot \bm I_h\bm w)|_K, \ \forall K\in \mathcal T_h
 	$$
 	 and
 	 $$
 	\|\bm v_{2,K}\|_{1,K}\leq C(\|\bm v_1\|_{1,K}+\|\bm I_h\bm w\|_{1,K}+\|q\|_{K}).
$$
Define $\bm v_2\in \bm H_0^1(\Omega)\cap \bm\Sigma_h^{k,+}$ by $\bm v_2|_{K}=\bm v_{2,K}.$ Setting $\bm v=\bm v_1+\bm v_2+\bm I_h\bm w$, we have 
\[\nabla\cdot \bm v= \nabla\cdot(\bm v_1+\bm v_2+\bm I_h\bm w)=q \text{ and }\|\bm v\|_{1}\leq C(\|\bm v_1\|_{1}+\|\bm I_h\bm w\|_{1}+\|q\|).\]
We apply the same scaling argument as used in Lemma \ref{local-ontoness} and the approximation property of the Scott-Zhang interpolation $\bm I_h\bm w$ \cite[(4.1)]{scott1990finite} to obtain 
\begin{align*}
	\|\bm v_1\|^2_{1,K}\leq &Ch_K^{-3}\|\hat{\bm v}_1\|^2_{1,\hat K}\leq Ch_K^{-3}\big|\langle({\bm w}-{\bm I_h\bm w})\cdot{\bm n}_i,1\rangle_{\partial K}\big|^2\leq Ch_K^{-1}\|{\bm w}-{\bm I_h\bm w}\|_{\partial K}^2\\
	&\leq C\left(h_K^{-2}\|{\bm w}-{\bm I_h\bm w}\|_{K}^2+\|{\bm w}-{\bm I_h\bm w}\|_{1, K}^2\right)\leq C\|\bm w\|_{1,\omega(K)}^2
	\end{align*}
	with $\omega(K)=\text{Int}\left\{\bar K_i|\bar K_i\cap \bar K\neq \emptyset, K_i\in \mathcal T_h\right\}$. 
	Summing over $K\in \mathcal T_h$, we obtain
	\[\|\bm v_1\|_1\leq C\|\bm w\|_1,\]
	which together with $\|\bm I_h\bm w\|_{1}\leq C\|\bm w\|_{1}$ \cite[(4.5)]{scott1990finite} and $\|\bm w\|_1\leq C \|q\|$ leads to 
	\[\|\bm v\|_1\leq C\|q\|.\]
\end{proof}
\begin{corollary}\label{cor:inf-sup}
	The inf-sup condition for the Stokes problem holds, i.e., there exists a positive constant $\alpha>0$ not depending on $h$, such that
	\[\sup _{0\neq \bm v\in \bm \Sigma_h^{k,+}}\frac{(\nabla\cdot \bm v,q)}{\|\bm v\|_1}\geq \alpha\|q\|, \ \forall q\in W_h^{k-1}.\]
\end{corollary}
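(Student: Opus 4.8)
The plan is to read the inf-sup condition directly off Lemma~\ref{global-ontoness}, which is the substantive statement; the corollary is the standard Fortin-type deduction that a bounded right inverse of the divergence implies inf-sup stability. First I would dispose of the trivial case $q=0$, for which both sides vanish. So fix $q\in W_h^{k-1}$ with $q\neq 0$.

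Next I would invoke Lemma~\ref{global-ontoness} to produce $\bm v\in\bm\Sigma_h^{+,k}$ with $\nabla\cdot\bm v=q$ and $\|\bm v\|_1\le C\|q\|$. Since $\nabla\cdot\bm v=q\neq0$, this $\bm v$ is nonzero, so it is an admissible competitor in the supremum. Testing against it gives
\[
\sup_{0\neq\bm w\in\bm\Sigma_h^{+,k}}\frac{(\nabla\cdot\bm w,q)}{\|\bm w\|_1}
\;\ge\;\frac{(\nabla\cdot\bm v,q)}{\|\bm v\|_1}
\;=\;\frac{(q,q)}{\|\bm v\|_1}
\;=\;\frac{\|q\|^2}{\|\bm v\|_1}
\;\ge\;\frac{\|q\|^2}{C\|q\|}
\;=\;\frac1C\,\|q\|.
\]
Setting $\alpha=1/C$ with $C$ the ($h$-independent) constant from Lemma~\ref{global-ontoness} finishes the argument, and the case $q=0$ is covered by the convention that the supremum is $\ge0=\alpha\|0\|$.

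There is no real obstacle here: all the work — constructing a divergence right inverse inside $\bm\Sigma_h^{+,k}$ with an $h$-uniform bound, which crucially uses the modified Bernardi–Raugel face bubbles to handle the non-mean-zero part of $q$ and Lemma~\ref{local-ontoness} for the local mean-zero part — has already been carried out in Lemmas~\ref{local-ontoness} and~\ref{global-ontoness}. I would only remark that, unlike the classical Scott–Vogelius setting, one does \emph{not} need to restrict $q$ to $\mathring L^2$ nor $\bm v$ to $\bm H_0^1$, precisely because $\bm\Sigma_h^{+,k}$ carries nonzero normal traces on $\partial\Omega$; this is exactly what makes Lemma~\ref{global-ontoness} apply verbatim to all of $W_h^{k-1}$.
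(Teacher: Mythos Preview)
Your proposal is correct and is exactly the intended argument: the paper states Corollary~\ref{cor:inf-sup} without proof precisely because it follows immediately from Lemma~\ref{global-ontoness} by the one-line computation you wrote out. Your added remark about not needing to restrict to $\mathring L^2$ or $\bm H_0^1$ is accurate and consistent with the paper's setup.
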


Corollary \ref{cor:inf-sup} implies that $\bm \Sigma_h^{k,+}-W_h^{k-1}$ leads to convergent algorithms for solving the Stokes problem with a precise divergence-free condition.

\begin{theorem}\label{exactness}
The complex \eqref{discrete-complex-r-k}  is  exact on contractible domains.
\end{theorem}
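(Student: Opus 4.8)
The plan is to prove exactness one node at a time, using the element-wise exactness of Lemma~\ref{local_exactness}, the global surjectivity of the divergence from Lemma~\ref{global-ontoness}, the exactness of the $L^2$ de Rham complex on contractible domains, and a dimension count for the single remaining node. First note that \eqref{discrete-complex-r-k} is a complex: the inclusions $\nabla\Sigma_h^r\subseteq V_h^{r-1,k+1}$, $\curl V_h^{r-1,k+1}\subseteq\bm\Sigma_h^{+,k}$ and $\div\bm\Sigma_h^{+,k}\subseteq W_h^{k-1}$ hold element by element from the construction of the local shape function spaces, and globally because each of $\Sigma_h^r$, $V_h^{r-1,k+1}$, $\bm\Sigma_h^{+,k}$, $W_h^{k-1}$ is precisely the subspace of the relevant Sobolev space whose elements restrict to the corresponding local space (for $\Sigma_h^r$ this is the $C^0$ Lagrange space $H^1(\Omega)\cap\prod_{K}P_r(K)$, while $V_h^{r-1,k+1}\subset H(\grad\curl;\Omega)$ by the conformity lemma); the consecutive compositions vanish since $\curl\nabla=0$ and $\div\curl=0$.

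Exactness at $\Sigma_h^r$ is immediate: a function in $\Sigma_h^r\subset H^1(\Omega)$ with vanishing gradient is constant on the connected domain $\Omega$, so $\ker\nabla=\mathbb{R}$. Exactness at $W_h^{k-1}$ is exactly the surjectivity statement of Lemma~\ref{global-ontoness}. For exactness at $V_h^{r-1,k+1}$, let $\bm u\in V_h^{r-1,k+1}$ with $\curl\bm u=0$; since $V_h^{r-1,k+1}\subset H(\curl;\Omega)$ and $\Omega$ is contractible, $\bm u=\nabla\phi$ for some $\phi\in H^1(\Omega)$. Restricting to a tetrahedron $K$, the local exactness (Lemma~\ref{local_exactness}) gives $\bm u|_K\in\nabla\Sigma_h^r(K)=\nabla P_r(K)$, so $\phi|_K$ differs from an element of $P_r(K)$ by an additive constant and hence lies in $P_r(K)$. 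Thus $\phi$ is continuous and piecewise a polynomial of degree $r$, i.e.\ $\phi\in\Sigma_h^r$, and $\bm u\in\nabla\Sigma_h^r$.

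It remains to prove exactness at $\bm\Sigma_h^{+,k}$, namely $\curl V_h^{r-1,k+1}=\{\bm v\in\bm\Sigma_h^{+,k}:\div\bm v=0\}$; the inclusion $\subseteq$ is the complex property. For the reverse inclusion I would argue by dimensions. For a complex of finite-dimensional spaces the alternating sum of the space dimensions equals the alternating sum of the cohomology dimensions; since exactness has already been shown at $\mathbb{R}$, $\Sigma_h^r$, $V_h^{r-1,k+1}$ and $W_h^{k-1}$, this reduces exactness at $\bm\Sigma_h^{+,k}$ to the scalar identity
\[\dim\Sigma_h^r-\dim V_h^{r-1,k+1}+\dim\bm\Sigma_h^{+,k}-\dim W_h^{k-1}=1.\]
To verify it, I would use that, by unisolvence and the inter-element conformity, the dimension of each of the four global spaces is the sum over the vertices, edges, faces and cells of the mesh of the number of its degrees of freedom attached to that entity. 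A direct inspection of the degree-of-freedom lists --- \eqref{tridef1-1}--\eqref{tridef1-4} for $V_h^{r-1,k+1}$ and the corresponding lists for $\Sigma_h^r$, $\bm\Sigma_h^{+,k}$, $W_h^{k-1}$ --- shows that, for every admissible $r$ and $k$, the alternating sum of the per-entity degree-of-freedom counts of the four spaces equals $1$ at a vertex, $-1$ at an edge, $1$ at a face, and $-1$ at a cell; this is the distribution over entities of the local Euler characteristic $\dim\Sigma_h^r(K)-\dim V_h^{r-1,k+1}(K)+\dim\bm\Sigma_h^{+,k}(K)-\dim W_h^{k-1}(K)=1$ supplied by Lemma~\ref{local_exactness}. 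Summing over the mesh and invoking the Euler relation $|\mathcal V_h|-|\mathcal E_h|+|\mathcal F_h|-|\mathcal T_h|=1$, which holds because $\Omega$ is contractible, yields the identity, hence exactness at $\bm\Sigma_h^{+,k}$.

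The main obstacle is the bookkeeping in this last step: one must track the degree-of-freedom counts of $V_h^{r-1,k+1}$ carefully --- including the length and area normalizations and the mapped polynomial spaces appearing in \eqref{tridef1-1}--\eqref{tridef1-4} --- together with the face- and interior-bubble degrees of freedom of $\bm\Sigma_h^{+,k}$, in order to confirm both the per-entity decomposition of $\dim X_h$ and that the per-entity alternating sums are $+1$ for vertices and faces and $-1$ for edges and cells. Note that contractibility enters in exactly two places --- the scalar-potential step for $V_h^{r-1,k+1}$ and the Euler relation --- matching the hypothesis of the theorem. A more constructive route for the last node would build, for a divergence-free $\bm v\in\bm\Sigma_h^{+,k}$, element-wise curl-potentials $\bm u_K\in V_h^{r-1,k+1}(K)$ via Lemma~\ref{local_exactness} and patch them across faces by gradient corrections, the obstruction to patching living in an acyclic complex on the mesh skeleton; this is correct but more laborious than the counting argument.
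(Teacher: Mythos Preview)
Your proof is correct and follows essentially the same strategy as the paper: exactness at $W_h^{k-1}$ from Lemma~\ref{global-ontoness}, exactness at $\bm\Sigma_h^{+,k}$ by a dimension count combined with Euler's formula $|\mathcal V_h|-|\mathcal E_h|+|\mathcal F_h|-|\mathcal T_h|=1$, and straightforward arguments at the two leftmost nodes. Two minor differences: (i) for exactness at $V_h^{r-1,k+1}$ the paper simply appeals to the exactness of standard finite element differential forms, whereas you spell out the scalar-potential argument via Lemma~\ref{local_exactness}; (ii) for the dimension count the paper writes down explicit closed-form expressions for $\dim\Sigma_h^r$, $\dim W_h^{k-1}$, and $\dim V_h^{r-1,k+1}-\dim\bm\Sigma_h^{+,k}$, while you package the same information as per-entity alternating sums $(+1,-1,+1,-1)$; these are equivalent, though in a final write-up you should actually record the inspection rather than assert it.
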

\begin{proof}
The exactness at $\Sigma_h^r$ and $V^{r-1,k+1}_h$ follows from the exactness of the standard finite element differential forms (e.g., \cite{arnold2018finite}). 
The exactness at $W^{k-1}_h$, i.e., the surjectivity of $\nabla\cdot: \bm\Sigma_h^{k,+} \to W^{k-1}_h$ is verified in Lemma \ref{global-ontoness}.

 Finally, the 
 the exactness at $\bm \Sigma_h^{k,+}$ follows from a dimension count. Let $\mathcal V$, $\mathcal E$, $\mathcal F$, and $\mathcal K$ denote the number of vertices, edges, faces, and 3D cells, respectively. Then we have 
 \[\dim \Sigma^{r}_h=\mathcal V+(r-1)\mathcal E+\frac{1}{2}(r-2)(r-1)\mathcal F+\frac{1}{6}(r-3)(r-2)(r-1)\mathcal K,\]
\[\dim W_h^{k-1}=\frac{k(k+1)(k+2)}{6}\mathcal K.\]
From the DOFs \eqref{tridef1-1} -\eqref{tridef1-4}, 
\begin{align*}
	&\dim V^{r-1, k+1}_h-\dim \bm \Sigma^{k,+}_h=r\mathcal E+\frac{1}{2}(r-2)(r-1)\mathcal F\\
	&-\mathcal F+\frac{1}{6}\big[(r-3)(r-2)(r-1)-k(k+1)(k+2)+6\big]\mathcal K. 
	\end{align*}
From the above dimension count, we have 
\[-1+\dim \Sigma_h^r - \dim V^{r-1,k+1}_h +\dim \bm \Sigma^{k,+}_h-\dim W_h^{k-1}=0,\]
where we have used Euler's formula $\mathcal V-\mathcal E+\mathcal F-\mathcal K=1$.
This completes the proof.
\end{proof}
\begin{remark}
The finite element spaces with vanishing boundary conditions also form an exact complex on contractible domains: 
\begin{equation}\label{discrete-complex-r-k-0}
\begin{tikzcd}
0\arrow{r}{\subset} &  
\mathring\Sigma_h^r  \arrow{r}{\nabla} & \mathring V^{r-1,k+1}_{h}  \arrow{r}{\nabla\times} & \mathring{\bm \Sigma}^{k,+}_h\arrow{r}{\nabla\cdot} &\mathring W_h^{k-1} \arrow{r}&0,
 \end{tikzcd}
\end{equation}
where $\mathring\Sigma_h=\Sigma^{r}_h\cap H_0^1(\Omega)$, $\mathring V^{r-1,k+1}_{h}=V^{r-1,k+1}_h\cap H_0(\grad\curl;\Omega)$, $\mathring{\bm \Sigma}^{k,+}_h={\bm \Sigma}^{k,+}_h\cap \bm H_0^1(\Omega)$, $\mathring W_h^{k-1}=W^{k-1}_h\cap \mathring L^2(\Omega)$ with  $H_0(\grad \curl; \Omega)=\{\bm u\in H(\grad \curl; \Omega):\ \bm u\times\bm n=0\text{ and } \nabla\times\bm u=0 \mbox{ on } \partial \Omega\}$.
\end{remark}

For $\delta >0$, 
denote $\Sigma=H^{3/2+\delta}(\Omega)$ and $V=\{\bm u\in \bm H^{1/2+\delta}(\Omega):\ \nabla \times \bm u \in H^{3/2+\delta}(\Omega)\}$. 
We use $\pi_h: \Sigma\rightarrow \Sigma_h^{r}$, $\widetilde{\bm \pi}_h: \bm \Sigma\rightarrow \bm \Sigma_h^{k,+}$,  $\bm r_h: V\rightarrow V_h^{r-1,k+1}$, and $i_h: L^2(\Omega)\rightarrow W_h^{k-1}$ to denote the interpolation operators defined by the DOFs for $\Sigma_h^r$, $\bm \Sigma_h^{k,+}$, $V_h^{r-1,k+1}$, and $W_h^{k-1}$, respectively.

We summarize the interpolations defined above in the following diagram:
\begin{equation}\label{2complex}
\begin{tikzcd}
\mathbb{R} \arrow[r,"\subset"]  & H^1(\Omega) \arrow[d ]\arrow[r,"\nabla"]  & H(\grad \curl;\Omega)\arrow[r,"\nabla\times"]\arrow[d]& \bm H^1(\Omega)\arrow[r,"\nabla\cdot"]\arrow[d]  & L^2(\Omega)\arrow[r]\arrow[d ]& 0\\
\mathbb{R} \arrow[r,"\subset"]  & \Sigma \arrow[d, "\pi_h" ]\arrow[r,"\nabla"]  & V\arrow[r,"\nabla\times"]\arrow[d, "\bm r_h" ]  &\bm{\Sigma}\arrow[r]\arrow[d, "\widetilde{\bm\pi}_h" ]\arrow[r,"\nabla\cdot"]\arrow[d]  & L^2(\Omega) \arrow[r]\arrow[d ,"i_h"]& 0\\
\mathbb{R} \arrow[r,"\subset"]  & \Sigma^{r}_h\arrow[r,"\nabla"]  & V^{r-1,k+1}_h\arrow[r,"\nabla\times"]  & \bm\Sigma^{k,+}_h\arrow[r,"\nabla\cdot"]  & W_h^{k-1} \arrow[r]& 0.\end{tikzcd}
\end{equation}

By a similar argument as in \cite[Theorem 5.49]{Monk2003},  the interpolations in \eqref{2complex} commute with the differential operators. 
\begin{lemma}\label{commute}
The last two rows of the complex \eqref{2complex} are a commuting diagram, i.e.,
\begin{align}
	\nabla\pi_h u&=\bm r_h\nabla u \text{ for all } u\in \Sigma,\label{Pih_and_pih}\\
	\nabla\times\bm r_h \bm u&=\widetilde{\bm \pi}_h\nabla\times\bm u \text{ for all } \bm u\in V,\label{Pih_and_tildepih}\\
	\nabla\cdot\widetilde{\bm \pi}_h \bm u&=i_h\nabla\cdot\bm u \text{ for all } \bm u\in \bm \Sigma. \label{wh_and_tildepih}
\end{align}
\end{lemma}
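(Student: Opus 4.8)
The plan is to verify each of the three commuting identities \eqref{Pih_and_pih}--\eqref{wh_and_tildepih} separately by showing that the two sides of each identity agree on all the degrees of freedom defining the relevant target finite element space, and then invoking unisolvence. In each case the key structural input is that the DOFs of the ``upper'' space in the complex were deliberately chosen so that applying the differential operator turns them into (a subset of, or a reparametrization of) the DOFs of the ``lower'' space; this is exactly the design principle behind \eqref{tridef1-1}--\eqref{tridef1-4}, where many of the DOFs for $V^{r-1,k+1}_h$ are moments of $\nabla\times\bm u$ against the shape functions dual to the DOFs of $\bm\Sigma^{+,k}_h$.

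For \eqref{Pih_and_pih}: both $\nabla\pi_h u$ and $\bm r_h\nabla u$ lie in $V^{r-1,k+1}_h$, so it suffices to check they have the same DOFs \eqref{tridef1-1}--\eqref{tridef1-4}. The vertex, edge, face, and interior DOFs of $V^{r-1,k+1}_h$ split into those involving $\bm u$ itself and those involving $\nabla\times\bm u$. Since $\nabla\times\nabla\pi_h u = \nabla\times\bm r_h\nabla u = 0$, all the curl-type DOFs vanish on both sides trivially. For the remaining DOFs, which are moments of $\bm u\cdot\bm\tau$ on edges and of $\bm u$ against $B_K$-scaled tangential/radial fields on faces and cells, one uses integration by parts (Stokes' theorem on the edge/face, or the identity relating $\int \nabla\phi\cdot\bm q$ to $\int \phi\,\nabla\cdot\bm q$) to rewrite each such moment of $\nabla(\pi_h u)$ as a moment of $\pi_h u$ against a test function in the corresponding Lagrange trace space, then use the defining property \eqref{def-inte-H1} of $\pi_h$, and match termwise with the same manipulation applied to $\nabla u$. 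The subtle bookkeeping here is that the face/interior DOFs of $V^{r-1,k+1}_h$ in the last line of \eqref{tridef1-3} and second line of \eqref{tridef1-4} use Koszul-type fields $B_K\hat{\bm x}$ and the face identity must be set up so that $\nabla_{f}\cdot$ and $\nabla\cdot$ land exactly on the Lagrange spaces $P_{r-3}(f_i)$ and $P_{r-4}(K_i)$ used by $\pi_h$.

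For \eqref{Pih_and_tildepih}: again both sides lie in $\bm\Sigma^{+,k}_h$, so one checks the DOFs \eqref{vector-sigma-DOFs1}--\eqref{vector-sigma-DOFs4}. Here the match is essentially immediate by construction: the curl-type DOFs \eqref{tridef1-2}--\eqref{tridef1-4} for $V^{r-1,k+1}_h$ were chosen (up to the $1/\operatorname{length}$, $1/\operatorname{area}$, and $\det B_K$ scalings) to be precisely the vertex, edge, face, and interior DOFs of $\bm\Sigma^{+,k}_h$ applied to $\nabla\times\bm u$. So $\bm M_\bullet^{\bm\Sigma^{+}}(\nabla\times\bm r_h\bm u)$ equals the curl-part of $\bm M_\bullet^{V}(\bm r_h\bm u)$, which equals the curl-part of $\bm M_\bullet^{V}(\bm u)$ by definition of $\bm r_h$ in \eqref{def-inte-01}, which equals $\bm M_\bullet^{\bm\Sigma^{+}}(\nabla\times\bm u)=\bm M_\bullet^{\bm\Sigma^{+}}(\tilde{\bm\pi}_h\nabla\times\bm u)$; one only has to track the normalization constants carefully and confirm that the DOF counts line up so that no DOF of $\bm\Sigma^{+,k}_h$ is left unaccounted for (the face normal-flux DOFs for $k=1,2$ correspond to the $\int_{f_i}\nabla\times\bm u\cdot\bm n_i$ arising from Stokes' theorem on edge-tangential DOFs of $V^{r-1,k+1}_h$).

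For \eqref{wh_and_tildepih}: both sides lie in $W_h^{k-1}$, whose only DOFs are $\int_K (\cdot)\,v\,\d V$ for $v\in P_{k-1}(K)$. By the interpolation definition of $i_h$, $\int_K i_h(\nabla\cdot\bm u)\,v = \int_K (\nabla\cdot\bm u)\,v = -\int_K \bm u\cdot\nabla v + \int_{\partial K}\bm u\cdot\bm n\,v$. On the other side, $\int_K (\nabla\cdot\tilde{\bm\pi}_h\bm u)\,v = -\int_K \tilde{\bm\pi}_h\bm u\cdot\nabla v + \int_{\partial K}\tilde{\bm\pi}_h\bm u\cdot\bm n\,v$. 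Now $\nabla v \in \nabla P_{k-2}(K)\subset \bm P_{k-4}(K)$ is (by the decomposition recorded in \eqref{vector-sigma-DOFs4}) among the test functions of the interior DOFs of $\bm\Sigma^{+,k}_h$ for $k\ge 4$, and for smaller $k$ the relevant moment is covered by the $\mathring{\mathcal S}_{k-1}$ block together with the constant; and the boundary term is controlled by the face normal DOFs which $\tilde{\bm\pi}_h$ reproduces. So $\int_K\tilde{\bm\pi}_h\bm u\cdot\nabla v = \int_K\bm u\cdot\nabla v$ and $\int_{\partial K}\tilde{\bm\pi}_h\bm u\cdot\bm n\,v=\int_{\partial K}\bm u\cdot\bm n\,v$ face by face, giving the identity.

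The main obstacle I anticipate is not any single identity but the low-order edge cases ($k=1,2$, and the corresponding small $r$), where the interior test spaces $\bm P_{k-4}$, $P_{k-3}$, $P_{k-1}$ degenerate or become negative-index (hence empty) and the moments against $\nabla v$ for $v\in P_{k-1}$ or against $\nabla v$ for $v\in\mathring{\mathcal S}_{k-1}$ must be shown to capture $\nabla v$ nonetheless — one has to confirm that the union of DOF test spaces in \eqref{vector-sigma-DOFs4} really does contain every gradient $\nabla v$, $v\in P_{k-1}(K)$, modulo constants, so that the integration-by-parts argument for \eqref{wh_and_tildepih} closes. A secondary nuisance is matching all the $1/\operatorname{length}(e_i)$, $1/\operatorname{area}(f_i)$, $\det(B_K)$, and $B_K^{-T}$ versus $B_K$ normalization factors between the $V^{r-1,k+1}_h$ curl-DOFs and the $\bm\Sigma^{+,k}_h$ DOFs so that the equalities hold on the nose rather than up to an unknown constant; since this is exactly the computation carried out in \cite[Thm 5.49]{Monk2003} for the de Rham case, I would follow that template and only flag the new bubble-related DOFs as requiring separate (but routine, via Lemma \ref{int_bubble_dof} and Lemma \ref{facebubble}) verification.
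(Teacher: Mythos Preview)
Your proposal is correct and follows exactly the route the paper takes: the paper gives no detailed proof but simply cites \cite[Thm~5.49]{Monk2003}, and your outline is precisely an elaboration of that DOF-by-DOF verification using unisolvence and integration by parts. The only slip is in your argument for \eqref{wh_and_tildepih}, where ``$\nabla v\in\nabla P_{k-2}\subset\bm P_{k-4}$'' is incorrect as written --- the right observation is $\nabla P_{k-1}=\nabla P_{k-3}\oplus\nabla\mathcal S_{k-1}\subset\bm P_{k-4}+\nabla\mathring{\mathcal S}_{k-1}$, which is exactly the interior test space in \eqref{vector-sigma-DOFs4}, so the integration-by-parts argument closes for every $k$.
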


We adopt the following Piola mapping to transform the finite element function $\bm u$ on a general element $K$ to a function $\hat{\bm u}$ on the reference element $\hat K$:
\begin{align}
\bm u \circ F_K = {B_K^{-T}}\hat{\bm u}.\label{mapping-u}
\end{align}
By a simple calculation, we have
\begin{align}
(\nabla\times\bm u) \circ F_K &= \frac{B_K}{\det(B_K)}\hat{\nabla}\times\hat{ \bm u},\label{mapping-curlu}\\
\bm n_i\circ F_K&= \frac{B_K^{-T} \hat {\bm n}_i}{\left|B_K^{-T} \hat {\bm n}_i\right|},\label{n}\\
\bm \tau_i\circ F_K&= \frac{B_K \hat {\bm \tau}_i}{\left|B_K\hat {\bm \tau}_i\right|}.\label{tau}
\end{align}

The following lemma relates the interpolation on $K$ to that on $\hat K$. 

\begin{lemma}\label{relation}
For $\bm u\in W$,  we have
$\widehat{\bm r_K \bm u}=\bm r_{\hat K}\hat {\bm u}$ with the transformation \eqref{mapping-u}.
\end{lemma}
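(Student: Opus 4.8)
The plan is to verify the commutation $\widehat{\bm r_K \bm u} = \bm r_{\hat K} \hat{\bm u}$ by checking that the two sides agree on all the degrees of freedom \eqref{tridef1-1}--\eqref{tridef1-4} that define the local interpolation on $\hat K$, and then invoking unisolvence of those DOFs on $V^{r-1,k+1}_h(\hat K)$. The starting point is that $\bm r_K\bm u \in V^{r-1,k+1}_h(K)$, so under the Piola pull-back \eqref{mapping-u} its image $\widehat{\bm r_K\bm u}$ lies in $V^{r-1,k+1}_h(\hat K)$ (this uses that the finite element space is Piola-invariant, which is built into the construction and the transformation rules \eqref{mapping-curlu}); likewise $\bm r_{\hat K}\hat{\bm u} \in V^{r-1,k+1}_h(\hat K)$. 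Hence it suffices to show $\bm M_{\hat\sigma}\big(\widehat{\bm r_K\bm u} - \hat{\bm u}\big) = 0$ for every DOF functional $\bm M_{\hat\sigma}$ (vertex, edge, face, interior) on $\hat K$, because $\bm r_{\hat K}$ is characterized by making exactly these functionals vanish against $\hat{\bm u}$.

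The key step is then a bookkeeping exercise: each DOF functional on $\hat K$ must be rewritten, via the change of variables $F_K$, as a constant multiple of a corresponding DOF functional on $K$. Concretely, for the functionals involving $\bm u$ itself (the tangential edge moments $\int_{\hat e}\hat{\bm u}\cdot\hat{\bm\tau}\,\hat q\,\d\hat s$, the face moments against $B_K^{-1}$-scaled tangential polynomials in \eqref{tridef1-3}, and the interior moments against $B_K\hat{\bm x}$-type fields in \eqref{tridef1-4}), the Piola mapping \eqref{mapping-u} together with the rules \eqref{n}--\eqref{tau} and the scaling of the measures $\d\hat s$, $\d\hat A$, $\d\hat V$ converts them, up to an $h_K$-dependent but $\bm u$-independent scalar, into the matching DOFs in \eqref{tridef1-2}--\eqref{tridef1-4} on $K$; those vanish on $\bm u - \bm r_K\bm u$ by the definition \eqref{def-inte-01}, hence their $\hat K$-counterparts vanish on $\hat{\bm u} - \widehat{\bm r_K\bm u}$. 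For the functionals involving $\nabla\times\bm u$ (the vertex values $(\nabla\times\bm u)(v_i)$, the edge moments of $\nabla\times\bm u$, and the face moments of $\nabla\times\bm u$ against $\bm n_i$, $\bm\tau_1$, $\bm\tau_2$), one uses \eqref{mapping-curlu}: $(\nabla\times\bm u)\circ F_K = \det(B_K)^{-1} B_K\,\hat\nabla\times\hat{\bm u}$, so $\widehat{\nabla\times\bm u}$ and $\hat\nabla\times\hat{\bm u}$ differ by the fixed matrix $\det(B_K)^{-1}B_K$; combining this with \eqref{n}--\eqref{tau} and the measure scalings again turns each $\hat K$-DOF into a scalar multiple of a $K$-DOF, which vanishes on $\nabla\times(\bm u - \bm r_K\bm u)$ by \eqref{def-inte-01}. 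Since all functionals in the unisolvent set vanish on the difference $\widehat{\bm r_K\bm u} - \bm r_{\hat K}\hat{\bm u}\in V^{r-1,k+1}_h(\hat K)$, that difference is zero.

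I expect the main obstacle to be the care needed in matching the Piola-transformed functionals exactly, particularly for the face and interior DOFs in \eqref{tridef1-3}--\eqref{tridef1-4}, which were deliberately written with the factors $B_K$, $B_K^{-T}$, $1/\det(B_K)$ and the normalizations $1/\operatorname{area}(f_i)$, $1/\operatorname{length}(e_i)$ precisely so that this pull-back is clean; one must confirm that these prefactors are the correct ones so that no residual $\bm u$-dependent weight survives. A secondary point worth stating carefully is the Piola-invariance of the local space $V^{r-1,k+1}_h(K)$ — that $\grad\Sigma^r_h$ and $\mathfrak p^2\bm\Sigma^{+,k}_h$ are each mapped into their reference counterparts by \eqref{mapping-u}. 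The argument otherwise parallels the classical one for $H(\curl)$-elements in \cite[Thm 5.49]{Monk2003} (or the analogous computation there for $H(\div)$), so I would present the transformation rules and state that the remaining verification is a direct computation of the same type.
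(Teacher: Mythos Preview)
Your approach is essentially the same as the paper's, which also reduces the claim to showing that the two collections of DOF functionals, pulled back under the Piola map, span the same space (the paper cites \cite[Proposition~3.4.7]{brenner2008mathematical} for this reduction). There is one place where your description is too optimistic, though, and it is exactly the point the paper singles out.

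You assert that each $\hat K$-DOF becomes a \emph{scalar multiple} of a single $K$-DOF. For the DOFs that were written with explicit $B_K$, $B_K^{-T}$, $1/\det(B_K)$ factors (the tangential edge moments of $\bm u$, the last set of face moments in \eqref{tridef1-3}, the interior moments in \eqref{tridef1-4}, and the first set of face moments in \eqref{tridef1-3}) this is indeed how things work out. But for the DOFs that read off components of $\nabla\times\bm u$ in a fixed frame---the vertex values \eqref{tridef1-1}, the second set of edge moments in \eqref{tridef1-2}, and especially the second and third sets of face moments in \eqref{tridef1-3}---the relation \eqref{mapping-curlu} introduces the matrix $B_K$, and the transformed functional is only a \emph{linear combination} of several reference DOFs, not a multiple of one. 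In the face case this combination even reaches down to lower-dimensional DOFs: writing $B_K^{T}\bm\tau_1 = (\,B_K^{T}\bm\tau_1\cdot\hat{\bm\tau}_1)\hat{\bm\tau}_1 + (\,B_K^{T}\bm\tau_1\cdot\hat{\bm\tau}_2)\hat{\bm\tau}_2 + (\,B_K^{T}\bm\tau_1\cdot\hat{\bm n})\hat{\bm n}$, the normal piece produces $\int_{\hat f_i}\hat\nabla\times\hat{\bm u}\cdot\hat{\bm n}\,\d\hat A$, which is not itself among the face DOFs (those test only against $P_{k-3}/\mathbb R$), and must be rewritten via Stokes as $\int_{\partial\hat f_i}\hat{\bm u}\cdot\hat{\bm\tau}\,\d\hat s$---a combination of edge DOFs. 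The paper carries out precisely this computation. So the correct statement, and the one you should prove, is that the $K$-DOFs are \emph{invertible linear combinations} of the $\hat K$-DOFs; with that in hand your unisolvence argument goes through unchanged.
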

\begin{proof}
Following \cite[Proposition 3.4.7]{brenner2008mathematical}, we only need to show the DOFs for defining $\widehat{\bm r_K \bm u}$ are linear combinations of those for defining $\bm r_{\hat K}\hat {\bm u}$. 

  By the transformations \eqref{mapping-u}, \eqref{mapping-curlu}, \eqref{n}, and \eqref{tau}, we have that all the DOFs in \eqref{tridef1-1}--\eqref{tridef1-4} are linear combinations of those for $\hat{\bm u}$ on $\hat K$. For instance, \begin{align*}
	&\int_{f_i}\nabla\times{\bm u}\cdot \bm \tau_i^1 \d A=\frac{1}{\det(B_K)}\int_{\hat f_i}\hat \nabla\times\hat{\bm u}\cdot \bm B_K^T\bm \tau_i^1 \d \hat A\\
	=&\frac{1}{\det(B_K)}\int_{\hat f_i}\hat \nabla\times\hat{\bm u}\cdot \left((\bm B_K^T\bm \tau_i^1\cdot \hat{\bm \tau}_i^1)\hat{\bm \tau}_i^1+(\bm B_K^T\bm \tau_i^1\cdot \hat{\bm \tau}_i^2)\hat{\bm \tau}_i^2+(\bm B_K^T\bm \tau_i^1\cdot \hat{\bm n}_i)\hat{\bm n}_i\right) \d \hat A\\
	=&\frac{1}{\det(B_K)}\int_{\hat f_i}\hat \nabla\times\hat{\bm u}\cdot \left((\bm B_K^T\bm \tau_i^1\cdot \hat{\bm \tau}_i^1)\hat{\bm \tau}_i^1+(\bm B_K^T\bm \tau_i^1\cdot \hat{\bm \tau}_i^2)\hat{\bm \tau}_i^2\right) \d \hat A\\
	&+\frac{(\bm B_K^T\bm \tau_i^1\cdot \hat{\bm n}_i)}{\det(B_K)}\int_{\partial\hat f_i}\hat{\bm u}\cdot \hat{\bm \tau}_{\partial f_i} \d \hat s.
\end{align*}
This completes the proof.
\end{proof}

Next, we establish the approximation property of the interpolation operators.

\begin{theorem}\label{inter-est}
Assume that $\bm u\in \bm H^{s+(r-k-1)}(\Omega)$ and $\nabla\times\bm u\in  \bm H^s(\Omega)$, $s \geq 3/2+\delta $ with $\delta>0$, and $r=k$, $k+1$, or $k+2$.
Then we have the following error estimates for the interpolation $\bm r_h$,
	\begin{align}
	&\left\|\bm u-\bm r_h\bm u\right\|\leq Ch^{\min\{s+(r-k-1),r\}}(\left\|\bm u\right\|_{s+(r-k-1)}+\left\|\nabla\times\bm u\right\|_{s}),\label{inter-u}\\
	&\left\|\nabla\times(\bm u-\bm r_h\bm u)\right\|\leq Ch^{\min\{s,k+1\}}\left\|\nabla\times\bm u\right\|_{s},\label{inter-curlu}\\
	&	\left|\nabla\times(\bm u-\bm r_h\bm u)\right|_1\leq Ch^{\min\{s-1,k\}}\left\|\nabla\times\bm u\right\|_{s}.\label{inter-curlcurlu}
	\end{align}
\end{theorem}
\begin{proof}
By the identity $\widehat{\bm r_K \bm u}=\bm r_{\hat K}\hat {\bm u}$  and the inclusion $\bm P_{r-1}(K)\subseteq V_h^{r-1,k+1}(K)$  (Lemma~\ref{relation} and Lemma~\ref{Vh}), the proof is standard, c.f., \cite[Theorem 5.41]{Monk2003}. Here we have used Lemma \ref{commute} to prove \eqref{inter-curlu} and \eqref{inter-curlcurlu}.
\end{proof}

\section{Applications to $-\curl\Delta\curl$ problems}\label{numericalexperiment}
In this section, we use the three  $\grad\curl$-conforming finite element families to solve a problem with $\curl\Delta\curl$ operator: for $\bm  f\in H(\div^0;\Omega)$, find $\bm u$, such that
\begin{equation}\label{prob1}
\begin{split}
-\nabla\times\Delta (\nabla\times\bm u)+\bm u&=\bm f\ \ \text{in}\;\Omega,\\
\nabla \cdot \bm u &= 0\ \ \text{in}\;\Omega,\\
\bm u\times\bm n&=0\ \ \text{on}\;\partial \Omega,\\
\nabla \times \bm u&=0\ \  \text{on}\;\partial \Omega.
\end{split}
\end{equation}
Here $\bm n$ is the unit outward normal vector on $\partial \Omega$, and
 $H(\div^0;\Omega)$ is the space of $\bm L^2(\Omega)$ functions with vanishing divergence, i.e., 
\[H(\text{div}^0;\Omega) :=\{\bm u\in {\bm L}^2(\Omega):\; \nabla\cdot \bm u=0\}.\]
Taking divergence on both sides of the first equation of \eqref{prob1}, we see that $\nabla\cdot\bm u=0$ automatically holds with $\bm{f}\in H(\div^0;\Omega)$.


The variational formulation reads:
find $\bm u\in H_0(\grad\curl;\Omega)$,  such that
\begin{equation}\label{prob22}
\begin{split}
a(\bm u,\bm v)&=(\bm f, \bm v)\quad \forall \bm v\in H_0(\grad\curl;\Omega),
\end{split}
\end{equation}
with $a(\bm u,\bm v):=(\nabla\nabla\times\bm u, \nabla\nabla\times\bm v) + (\bm u,\bm v)$. The weak form \eqref{prob22} can be regarded as a model problem for the high order problems in MHD, e.g., \cite[(1)]{chacon2007steady} and continuum mechanics with size effects, e.g., \cite[(3.27)]{mindlin1962effects}, \cite[(35)]{park2008variational}. 
\begin{remark}
The $\grad\curl$ operator appears in the following complex (referred to as the $\grad\curl$ complex) 
\begin{equation}\label{grad-curl}
\begin{tikzcd}[column sep=1.5em]
0 \!\arrow{r}{}\!&\!H^{q}\arrow{r}{\grad} & H^{q-1}\otimes \mathbb{V}  \arrow{r}{\grad\curl} &[12] H^{q-3}\otimes \mathbb{T}\! \arrow{r}{\curl}\! & H^{q-4}\otimes \mathbb{M} \!\arrow{r}{\div}\! &\! H^{q-5}\otimes \mathbb{V} \arrow{r}\! & \!0,
\end{tikzcd}
\end{equation}
which can be derived from de Rham complexes \cite[(46)]{arnold2020complexes}. Thus \eqref{prob22} is closely related to one of the Hodge-Laplacian problems associated to the $\grad\curl$ complex.
\end{remark}

 \begin{remark}
 With the given boundary conditions and the identity for vector Laplacian $-\Delta \bm{u}=-\nabla\nabla\cdot \bm{u}+\nabla\times \nabla\times \bm{u}$, the above weak form is equivalent to the quad-curl problem, i.e., 
 $
(\nabla\nabla\times\bm u, \nabla\nabla\times\bm v) =(\nabla\times \nabla\times\bm u, \nabla\times \nabla\times\bm v).
 $
 \end{remark}
The $\grad\curl$-conforming finite element method for \eqref{prob22} reads: seek $\bm u_h\in \mathring V^{r-1,k+1}_{h} $,  such that
\begin{equation}\label{prob3}
\begin{split}
 a(\bm u_h,\bm v_h)&=(\bm f, \bm v_h)\quad \forall \bm v_h\in \mathring V^{r-1,k+1}_{h} .
\end{split}
\end{equation}


\begin{theorem}\label{regularity-3D}
We assume that $\Omega$ is a simply-connected Lipschitz polyhedral domain with a connected boundary. There exists a constant $\alpha>1/2$ such that the solution $\bm u$ of \eqref{prob1} satisfies
	\[\bm u\in  \bm H^{\alpha}(\Omega),~ \nabla\times\bm u\in \bm H^{1+\alpha}(\Omega),\] and it holds 
	\[\|\bm u\|_{{\alpha}}+\|\nabla\times\bm u\|_{{1+\alpha}}\leq C\|\bm f\|.\]
\end{theorem}
\begin{proof}
The claim that $\bm u\in \bm H^{\alpha}(\Omega)$ follows from the embedding
$H_0(\curl;\Omega)\cap H(\div;\Omega)\hookrightarrow \bm H^{\alpha}(\Omega)$ with  $\alpha>1/2$ \cite{amrouche1998vector}, and it holds
\[\|\bm u\|_{{\alpha}}\leq C\left( \|\bm u\|+\|\nabla\cdot \bm u\|+\|\nabla\times\bm u\|\right)=C\left( \|\bm u\|+\|\nabla\times\bm u\|\right).\]
Furthermore, by Poincar\'e inequality, we have
\[\|\bm u\|_{{\alpha}}\leq C\left( \|\bm u\|+\|\nabla\nabla\times\bm u\|\right)\leq C\|\bm f\|.\]
If $-\Delta(\nabla\times\bm u)$ belongs to $\bm L^2(\Omega)$, then from the boundary condition
$\nabla\times\bm u=0$ and
the regularity of the Laplace problem \cite[Theorem 3.18]{Monk2003}, we can obtain 
$\nabla\times\bm u\in \bm H^{1+\alpha}(\Omega)$ with $\alpha>1/2$, and 
\[\|\nabla\times\bm u\|_{{1+\alpha}}\leq C \|\Delta(\nabla\times\bm u)\|.\]

It suffices to show that $(\nabla\times)^3 \bm u\in \bm L^2(\Omega)$ and $\|(\nabla\times)^3 \bm u\|\leq C\|\bm f\|$ since $-\Delta (\nabla\times\bm{u})=-\nabla\nabla\cdot \nabla\times\bm{u}+(\nabla\times)^3\bm{u}=(\nabla\times)^3\bm{u}$. If we can prove 
	\begin{align}\label{boundedfunctional}
	g(\bm v):=((\nabla\times)^2\bm u,\nabla\times\bm v)\leq C_0\|\bm v\|,\text{ for all } \bm v\in H_0(\curl;\Omega),
		\end{align}
		where $H_0(\curl;\Omega)=\{\bm u\in H(\curl;\Omega):\bm u\times\bm n=0 \text{ on }\partial\Omega\}$, then, by Hahn Banach theorem, there is a unique extension of the map $g(\bm v)$ to a bounded linear functional from all of $\bm L^2(\Omega)$ to $\mathbb R$ with the bound $C_{0}$. Moreover, by Riesz representation theorem, there exists a unique element $\bm \phi\in \bm L^2(\Omega)$ such that
    \[g(\bm v)=((\nabla\times)^2\bm u,\nabla\times \bm v)=(\bm \phi,\bm v), \text{ for }\bm v\in H_0(\curl;\Omega).\]
   From the definition of the adjoint of $\nabla\times$, we have $(\nabla\times)^3\bm u=\bm \phi\in \bm L^2(\Omega)$ and $\|(\nabla\times)^3\bm u\|=\|g\|_{\mathcal L(\bm L^2(\Omega),\mathbb R)}\leq C_{0}.$
    
	To prove \eqref{boundedfunctional}, we first seek $q\in H_0^1(\Omega)$ such that
	\begin{align*}
		-\Delta q=\nabla\cdot \bm v\in H^{-1}(\Omega).
	\end{align*}
	Then it holds $\|\nabla q\|\leq \|\bm v\|.$
	Applying \cite[Theorem 3.6]{Girault2012Finite} to $\bm v-\nabla q$, there exists a divergence-free vector potential $\bm w\in H_0(\curl;\Omega)$ satisfying
	\begin{align}\label{vcurlw}
	\bm v-\nabla q=\nabla\times\bm w.
	\end{align}
	Since $\bm v-\nabla q\in H_0(\curl;\Omega)$, then $\bm w\in H_0(\curl\curl;\Omega)$.
	From \eqref{vcurlw} and the Friedrichs inequality \cite[Corollary 3.51]{Monk2003}, we have
	\begin{align*}
	&((\nabla\times)^2\bm u,\nabla\times \bm v)=((\nabla\times)^2\bm u,(\nabla\times)^2\bm w)\\
	=&(\bm f-\bm u,\bm w)\leq \|\bm f-\bm u\|\|\bm w\|\leq C\|\bm f-\bm u\|\|\nabla\times\bm w\|\\
	\leq C&\|\bm f-\bm u\|\left(\|\bm v\|+\|\nabla q\|\right)\leq C\|\bm f-\bm u\|\|\bm v\|\leq C\|\bm f\|\|\bm v\|,
	\end{align*}
	which leads to \eqref{boundedfunctional} with $C_0=C\|\bm f\|.$
	\end{proof}

To estimate the error in the sense of $H(\curl)$-norm, we introduce the following auxiliary problem. Find $\bm w$ such that
\begin{equation}\label{auxiliary-prob}
\begin{split}
	-\nabla\times\Delta(\nabla\times\bm w)+\bm w&=(\nabla\times)^2(\bm u-\bm u_h)\text{ in } \Omega,\\
	\nabla\cdot\bm w&=0\text{ in } \Omega, \\
	\bm w\times\bm n&=0 \text{ on }\partial \Omega,\\
	\nabla\times\bm w&=0 \text{ on }\partial \Omega.
\end{split}
\end{equation}
Due to the special form of the right-hand side in the auxiliary problem, we can have a better regularity estimate  by a suitable modification to the proof of Theorem \ref{regularity-3D}. This result will play an important role in the dual argument in the approximation analysis below.
\begin{theorem}\label{regularity-aux}
We assume that $\Omega$ is a simply-connected Lipschitz polyhedral domain with a connected boundary. The solution $\bm w$ of \eqref{auxiliary-prob} satisfies
		\[\|\bm w\|_{{\alpha}}+\|\nabla\times\bm w\|_{{1+\alpha}}\leq C\|\nabla
		\times(\bm u-\bm u_h)\|.\]
\end{theorem}
\begin{remark}
	Furthermore, if $\Omega$ is convex, then the constant $\alpha$ in Theorem \ref{regularity-3D} and Theorem \ref{regularity-aux} can be 1. 
\end{remark}

\begin{theorem}\label{uh_approx}
For $r=k$, $r=k+1$, or $r=k+2$, if $\bm u\in \bm H^{s+(r-k-1)}(\Omega)$ and $\nabla\times\bm u\in  H^s(\Omega)$, $s \geq 1+\alpha$, we have the following error estimates for the numerical solution $\bm u_h$:
	\begin{align}
	&\quad\quad\left\|\bm u-\bm u_h\right\|_{H(\grad\curl;\Omega)}\leq Ch^{\min\{s-1,k\}}\left(\left\|\bm u\right\|_{s-1}+\left\|\nabla\times\bm u\right\|_{s}\right),\label{gradcurluh}\\
	&\quad\quad\ \left\|\nabla\times(\bm u-\bm u_h)\right\|\leq Ch^{\min\{s,k+1,2\alpha\}}\left(\left\|\bm u\right\|_{s-1}+\left\|\nabla\times\bm u\right\|_{s}\right),\label{curluh}\\
	&\left\|\bm u-\bm u_h\right\|\leq Ch^{\min\{s,k+1,2\alpha\}}(\left\|\bm u\right\|_{s}+\left\|\nabla\times\bm u\right\|_{s})\text{ when } r=k+1,k+2.\label{uh}
		\end{align}
\end{theorem}
\begin{proof}
	The estimates \eqref{gradcurluh} and \eqref{curluh} follow immediately from C\'ea's lemma, the dual argument, and Theorem \ref{inter-est}. Proceeding as in the proof of \cite[Theorem 6]{Sun2016A}, we can show that \eqref{uh} holds.	
\end{proof}

\begin{remark}
	The estimate for $\left\|\bm u-\bm u_h\right\|$ is not optimal for the family $r=k+2$. 
\end{remark}

The validity of the $\grad\curl$-conforming elements can be guaranteed by the theoretical analysis. 	
We now carry out several numerical tests to validate the nonconforming elements without the modification of the Poincar\'e operator. 
We consider the problem \eqref{prob1} on a unit cube $\Omega=(0,1)\times(0,1)\times(0,1)$ 
with an exact solution
\[\bm u= \left(\begin{array}{c}
	\sin(\pi x_1)^3\sin(\pi x_2)^2\sin(\pi x_3)^2\cos(\pi x_2)\cos(\pi x_3)\\ 
    \sin(\pi x_2)^3\sin(\pi x_3)^2\sin(\pi x_1)^2\cos(\pi x_3)\cos(\pi x_1)\\
    -2\sin(\pi x_3)^3\sin(\pi x_1)^2\sin(\pi x_2)^2\cos(\pi x_1)\cos(\pi x_2)
\end{array}\right).\]
Then, by a simple calculation, we can obtain the source term $\bm f$. We denote the finite element solution as $\bm u_h$. To measure the error between the exact solution and the finite element solution, we denote  
	\[\bm e_h=\bm u-\bm u_h.\]

For the mesh, we partition the unit cube into $N^3$ small cubes and then partition each small cube into 6 congruent tetrahedra. 

We first use the lowest-order $(k=1)$ elements in the families $r=k$ and $r=k+1$ to solve the problem \eqref{prob1} on the uniform tetrahedral mesh. Tables \ref{tetra-tab1} and \ref{tetra-tab2} illustrate errors and convergence rates for the two families.  
We observe that the numerical solution converges to the exact one at rate $h$ for the case $r=k=1$, and at rate $h^2$ for $r=k+1=2$ in the sense of the $L^2$-norm. In addition, the two families have the same convergence rate $h^2$ in the $H(\curl)$-norm and $h$ in the $H(\grad\curl)$-norm, respectively. 

We now test the third-order element ($k=3$). Tables \ref{tetra-tab4} demonstrates numerical data for the family $r=k$. 
Our code for the basis functions of the elements when $k=1,3$ is available at

\url{https://github.com/QianZhangMath/3D-gradcurl-conforming-FE-18DOF}.

\begin{table}[!ht]
	\centering
	\caption{Numerical results by the $\grad \curl$-conforming element with $r=k$ and $k=1$} \label{tetra-tab1}
	\begin{tabular}{cccccccc}
		\hline
		$N$ &$\left\|\bm e_h\right\|$&rates&$\left\|\nabla\times\bm e_h\right\|$&rates&$\left\|\nabla\nabla\times\bm e_h\right\|$& rates\\
		\hline
		$45$&8.642113e-03& &7.620755e-02&&2.862735e+00&\\
        $50$&7.401715e-03& 1.4705&6.317760e-02&1.7797&2.601358e+00&0.9087 \\
        $55$&6.443660e-03& 1.4544&5.314638e-02&1.8141&2.382186e+00&0.9235
 \\
 $60$&5.687783e-03& 1.4340&4.527838e-02&1.8414&2.196043e+00&0.9351\\
		\hline
	\end{tabular}
\end{table}

\begin{table}[!ht]
	\centering
	\caption{Numerical results by the $\grad \curl$-conforming element with $r=k+1$ and $k=1$} \label{tetra-tab2}
	\begin{tabular}{cccccccc}
		\hline
		$N$ &$\left\|\bm e_h\right\|$&rates&$\left\|\nabla\times\bm e_h\right\|$&rates&$\left\|\nabla\nabla\times\bm e_h\right\|$& rates\\
		\hline
		$ 30$ &1.334051e-02 && 1.453615e-01& &4.055510e+00& \\
		$ 35$ &1.033747e-02&1.6544& 1.135563e-01&1.6018 &3.567777e+00 & 0.8312  \\
		$ 40$&8.212073e-03&1.7237&9.077071e-02&1.6772&3.178759e+00 &0.8646\\
		$ 45$&6.662599e-03&1.7753&7.399883e-02&1.7344&2.862553e+00&0.8896\\
	  \hline
	\end{tabular}
\end{table}


\begin{table}[!ht]
	\centering
	\caption{Numerical results by the $\grad \curl$-conforming element with $r=k$ and $k=3$} \label{tetra-tab4}
	\begin{tabular}{cccccccc}
		\hline
		$N$ &$\left\|\bm e_h\right\|$&rates&$\left\|\nabla\times\bm e_h\right\|$&rates&$\left\|\nabla\nabla\times\bm e_h\right\|$& rates\\
		\hline
		$ 10$&3.047288e-04&  &2.974941e-03&& 2.909078e-01&\\
		$ 12$ &1.719285e-04& 3.1392 & 1.403569e-03&4.1202 & 1.779005e-01 &2.6973 \\
		$ 14$&1.070064e-04& 3.0761    &7.353798e-04 &4.1932& 1.162168e-01&2.7620\\
		$ 16$&7.125639e-05&3.0450&4.174453e-04&4.2405&7.986321e-02 &2.8094\\
		\hline
	\end{tabular}
\end{table}

\section{Concluding remarks}
In this paper we constructed 3D finite element Stokes complexes on tetrahedral meshes. Generalizing the modified Bernardi-Raugel bubbles in \cite{guzman2018inf} to an arbitrary order and utilizing the Poincar\'e operators for the de~Rham complexes, we obtain simple finite element spaces with canonical DOFs. The newly obtained finite elements allow further applications in mass-conservative approximation of fluid mechanics and high order models in continuum mechanics and electromagnetism. 

Since the entire discrete de~Rham complexes are obtained, one may further investigate robust solvers in the framework of subspace correction \cite{lee2007robust, schoeberlthesis}. These results also show promising directions for elasticity as in \cite{burman2020application,christiansen2020discrete,gopalakrishnan2012second}.

\bibliographystyle{plain}
\bibliography{reference}

\begin{thebibliography}{10}

\bibitem{alfeld1984trivariate}
P.~Alfeld.
\newblock {A trivariate Clough-Tocher scheme for tetrahedral data}.
\newblock {\em Computer Aided Geometric Design}, 1(2):169--181, 1984.

\bibitem{amrouche1998vector}
C.~Amrouche, C.~Bernardi, M.~Dauge, and V.~Girault.
\newblock Vector potentials in three-dimensional non-smooth domains.
\newblock {\em Mathematical Methods in the Applied Sciences}, 21(9):823--864,
  1998.

\bibitem{arnold2018finite}
D.~N. Arnold.
\newblock {\em {Finite Element Exterior Calculus}}, volume~93.
\newblock SIAM, 2018.

\bibitem{arnold2006finite}
D.~N. Arnold, R.~Falk, and R.~Winther.
\newblock Finite element exterior calculus, homological techniques, and
  applications.
\newblock {\em Acta Numerica}, 15:1--155, 2006.

\bibitem{arnold2010finite}
D.~N. Arnold, R.~Falk, and R.~Winther.
\newblock Finite element exterior calculus: from hodge theory to numerical
  stability.
\newblock {\em Bulletin of the American Mathematical Society}, 47(2):281--354,
  2010.

\bibitem{arnold2020complexes}
D.~N. Arnold and K.~Hu.
\newblock Complexes from complexes.
\newblock {\em Foundations of Computational Mathematics}, pages 1--36, 2021.

\bibitem{brenner2008mathematical}
S.~C. Brenner and L.~R. Scott.
\newblock {\em The Mathematical Theory of Finite Element Methods}, volume~15.
\newblock Springer Science \& Business Media, 2008.

\bibitem{burman2020application}
E.~Burman, S.~Christiansen, and P.~Hansbo.
\newblock Application of a minimal compatible element to incompressible and
  nearly incompressible continuum mechanics.
\newblock {\em arXiv preprint arXiv: 2003.10746}, 2020.

\bibitem{chacon2007steady}
L.~Chac{\'o}n, A.~N. Simakov, and A.~Zocco.
\newblock {Steady-state properties of driven magnetic reconnection in 2D
  electron magnetohydrodynamics}.
\newblock {\em Physical Review Letters}, 99(23):235001, 2007.

\bibitem{christiansen2018nodal}
S.~Christiansen, J.~Hu, and K.~Hu.
\newblock Nodal finite element de {R}ham complexes.
\newblock {\em Numerische Mathematik}, 139(2):411--446, 2018.

\bibitem{christiansen2016generalized}
S.~Christiansen and K.~Hu.
\newblock Generalized finite element systems for smooth differential forms and
  stokes' problem.
\newblock {\em Numerische Mathematik}, 140(2):327--371, 2018.

\bibitem{christiansen2020discrete}
S.~H. Christiansen, J.~Gopalakrishnan, J.~Guzm{\'a}n, and K.~Hu.
\newblock A discrete elasticity complex on three-dimensional alfeld splits.
\newblock {\em arXiv preprint arXiv:2009.07744}, 2020.

\bibitem{evans2013isogeometric}
J.~A. Evans and T.~J. Hughes.
\newblock {Isogeometric divergence-conforming B-splines for the steady
  Navier-Stokes equations}.
\newblock {\em Mathematical Models and Methods in Applied Sciences},
  23(08):1421--1478, 2013.

\bibitem{falk2013stokes}
R.~Falk and M.~Neilan.
\newblock Stokes complexes and the construction of stable finite elements with
  pointwise mass conservation.
\newblock {\em SIAM Journal on Numerical Analysis}, 51(2):1308--1326, 2013.

\bibitem{farrell2020reynolds}
P.~E. Farrell, L.~Mitchell, L.~R. Scott, and F.~Wechsung.
\newblock {A Reynolds-robust preconditioner for the Reynolds-robust
  Scott-Vogelius discretization of the stationary incompressible Navier-Stokes
  equations}.
\newblock {\em arXiv preprint arXiv: 2004.09398}, 2020.

\bibitem{fu2020exact}
G.~Fu, J.~Guzm{\'a}n, and M.~Neilan.
\newblock {Exact smooth piecewise polynomial sequences on Alfeld splits}.
\newblock {\em Mathematics of Computation}, 89(323):1059--1091, 2020.

\bibitem{Girault2012Finite}
V.~Girault and P.~Raviart.
\newblock {\em Finite element methods for {N}avier-{S}tokes equations: theory
  and algorithms}, volume~5.
\newblock Springer Science \& Business Media, 2012.

\bibitem{gopalakrishnan2012second}
J.~Gopalakrishnan and J.~Guzm{\'a}n.
\newblock A second elasticity element using the matrix bubble.
\newblock {\em IMA Journal of Numerical Analysis}, 32(1):352--372, 2012.

\bibitem{guzman2020exact}
J.~Guzman, A.~Lischke, and M.~Neilan.
\newblock {Exact sequences on Worsey-Farin Splits}.
\newblock {\em arXiv preprint arXiv: 2008.05431}, 2020.

\bibitem{guzman2018inf}
J.~Guzm{\'a}n and M.~Neilan.
\newblock Inf-sup stable finite elements on barycentric refinements producing
  divergence-free approximations in arbitrary dimensions.
\newblock {\em SIAM Journal on Numerical Analysis}, 56(5):2826--2844, 2018.

\bibitem{hiptmair1999canonical}
R.~Hiptmair.
\newblock Canonical construction of finite elements.
\newblock {\em Mathematics of Computation}, 68(228):1325--1346, 1999.

\bibitem{hiptmair2007nodal}
R.~Hiptmair and J.~Xu.
\newblock Nodal auxiliary space preconditioning in {H}(curl) and {H}(div)
  spaces.
\newblock {\em SIAM Journal on Numerical Analysis}, 45(6):2483--2509, 2007.

\bibitem{HZZcurlcurl2D}
K.~Hu, Q.~Zhang, and Z.~Zhang.
\newblock Simple curl-curl-conforming finite elements in two dimensions.
\newblock {\em SIAM Journal on Scientific Computing}, 42(6):A3859--A3877, 2020.

\bibitem{hu2020simple-revised}
Kaibo Hu, Qian Zhang, and Zhimin Zhang.
\newblock Simple curl-curl-conforming finite elements in two dimensions.
\newblock {\em arXiv preprint arXiv:2004.12507}, 2020.

\bibitem{john2017divergence}
V.~John, A.~Linke, C.~Merdon, M.~Neilan, and L.~Rebholz.
\newblock On the divergence constraint in mixed finite element methods for
  incompressible flows.
\newblock {\em SIAM Review}, 59(3):492--544, 2017.

\bibitem{lang2012fundamentals}
S.~Lang.
\newblock {\em Fundamentals of differential geometry}, volume 191.
\newblock Springer Science \& Business Media, 2012.

\bibitem{lee2007robust}
Y.~Lee, J.~Wu, J.~Xu, and L.~Zikatanov.
\newblock Robust subspace correction methods for nearly singular systems.
\newblock {\em Mathematical Models and Methods in Applied Sciences},
  17(11):1937--1963, 2007.

\bibitem{mindlin1962effects}
R.~D. Mindlin and H.~F. Tiersten.
\newblock Effects of couple-stresses in linear elasticity.
\newblock {\em Archive for Rational Mechanics and Analysis}, 11(1):415--448,
  1962.

\bibitem{Monk2003}
P.~Monk.
\newblock {\em Finite Element Methods for {M}axwell's Equations}.
\newblock Oxford University Press, 2003.

\bibitem{neilan2015discrete}
M.~Neilan.
\newblock {Discrete and conforming smooth de Rham complexes in three
  dimensions}.
\newblock {\em Mathematics of Computation}, 84(295):2059--2081, 2015.

\bibitem{park2008variational}
S.~K. Park and X.~Gao.
\newblock Variational formulation of a modified couple stress theory and its
  application to a simple shear problem.
\newblock {\em Zeitschrift f{\"u}r angewandte Mathematik und Physik},
  59(5):904--917, 2008.

\bibitem{schoeberlthesis}
J.~Sch{\"o}berl.
\newblock {\em Robust multigrid methods for parameter dependent problems}.
\newblock PhD thesis, Johannes Kepler Universit\"at Linz, 1999.

\bibitem{scott1990finite}
L.~R. Scott and S.~Zhang.
\newblock Finite element interpolation of nonsmooth functions satisfying
  boundary conditions.
\newblock {\em Mathematics of Computation}, 54(190):483--493, 1990.

\bibitem{Sun2016A}
J.~Sun.
\newblock A mixed {FEM} for the quad-curl eigenvalue problem.
\newblock {\em Numerische Mathematik}, 132(1):185--200, 2016.

\bibitem{tai2006discrete}
X.-C. Tai and R.~Winther.
\newblock A discrete de {R}ham complex with enhanced smoothness.
\newblock {\em Calcolo}, 43(4):287--306, 2006.

\bibitem{xu2009optimal}
J.~Xu, L.~Chen, and R.~H. Nochetto.
\newblock Optimal multilevel methods for {H}(grad), {H}(curl), and {H}(div)
  systems on graded and unstructured grids.
\newblock In {\em Multiscale, nonlinear and adaptive approximation}, pages
  599--659. Springer, 2009.

\bibitem{WZZelement}
Q.~Zhang, L.~Wang, and Z.~Zhang.
\newblock H($\text{curl}^2$)-conforming finite elements in 2 dimensions and
  applications to the quad-curl problem.
\newblock {\em SIAM Journal on Scientific Computing}, 41(3):A1527--A1547, 2019.

\bibitem{3D-curlcurlconforming}
Q.~Zhang and Z.~Zhang.
\newblock A family of curl-curl conforming finite elements on tetrahedral
  meshes.
\newblock {\em CSIAM Transactions on Applied Mathematics}, 1(4):639--663, 2020.

\end{thebibliography}
\vspace{1cm}

\end{document}